\documentclass[english,a4paper,12pt]{amsart}

\usepackage[T1]{fontenc} 
\usepackage[latin1]{inputenc}
\usepackage{amssymb, babel}
\usepackage{amssymb}
\usepackage{amsmath}
\usepackage{bm}

\addtolength {\textwidth}{3cm}
\addtolength{\hoffset}{-2cm}
\makeatletter 
\makeatother
\usepackage{amssymb}
\usepackage{amsmath}

\newtheorem{theorem}{Theorem}[section] 
 
\newtheorem{corollary}[theorem]{Corollary} 
\newtheorem{proposition}[theorem]{Proposition}
 
\theoremstyle{definition}

\newtheorem{definition}[theorem]{Definition} 

\newtheorem{remark}[theorem]{Remark}

\newtheorem{example}[theorem]{Example}

\title[]{A Lie conformal superalgebra and duality of representations for $E(4,4)$}
\DeclareMathOperator{\str}{str}
\DeclareMathOperator{\ad}{ad}
\DeclareMathOperator{\Cur}{Cur}
\DeclareMathOperator{\diver}{div}

\DeclareMathOperator{\Der}{Der}
\DeclareMathOperator{\Ind}{Ind}
\author{Nicoletta Cantarini}\author{Fabrizio Caselli}\author{Victor Kac}
\subjclass[2010]{08A05, 17B05 (primary), 17B65, 17B70 (secondary)}
\keywords{Linearly compact Lie superalgebra, Lie conformal superalgebra, annihilation algebra, parabolic Verma module, shift character, duality.}

\begin{document}
	\maketitle
	\thispagestyle{empty}
	\newcommand{\inlinewedge}{\C}
	\newcommand{\displaywedge}{\C}
	
	\def\C{{\mathbb F}}
	\def\R{{\mathbb R}}
	\def\Z{{\mathbb Z}}
	\def\lab{\textrm{{\boldmath $\lambda$}}}
	\def\la{\textrm{{\boldmath $\lambda$}}}
	\def\deb{\textrm{{\boldmath $\de$}}}
	\def\mub{\textrm{{\boldmath $\mu$}}}
	\def\nub{\textrm{{\boldmath $\nu$}}}
	\def\yb{\textrm{{\boldmath $y$}}}
	\def\xb{\textrm{{\boldmath $x$}}}
	\def\zb{\textrm{{\boldmath $z$}}}
	\def\wb{\textrm{{\boldmath $w$}}}
	\def\xib{\textrm{{\boldmath $\xi$ \hspace{-1mm}}}}
	\def\de{\partial}
	\def\sl{\mathfrak{sl}}
	\def\g{\mathfrak{g}}

\maketitle 
\begin{abstract} We construct a duality functor in the category of continuous
representations
of the Lie superalgebra $E(4,4)$, the only exceptional simple linearly
compact
Lie superalgebra, for which it wasn't known. This is achieved
by constructing a Lie conformal superalgebra of type $(4,4)$, for which
$E(4,4)$
is the annihilation algebra. Along the way we obtain an explicit
realization of $E(4,4)$
by vector fields on a $(4|4)$-dimensional supermanifold.
\end{abstract}
\section{Introduction}
In our paper \cite{CCK} we constructed a duality functor on the category of continuous modules over some linearly compact Lie superalgebra $L$. The main assumption on $L$ for this construction is the existence of a Lie conformal superalgebra $R$ of type $(r,s)$, whose annihilation algebra is $L$.

The notion of a Lie conformal superalgebra, as treated in \cite{K1}, corresponds to our notion of a Lie conformal superalgebra of type $(1,0)$.
Recall that the latter is an $\C[\de]$-module $R$ with a $\lambda$-bracket
$R\otimes R\rightarrow \C[\lambda]\otimes R$,
\[a_\lambda b=\sum_{j\geq 0}(a_{(j)}b)\lambda^j/j!,\]
satisfying axioms, similar to the Lie algebra axioms. Here $\lambda$  and \ $\de$ are even indeterminates. For the more general Lie conformal superalgebras $R$ of type $(r,s)$ one takes, instead, $r$ even and $s$ odd
indeterminates (see Section 2 for the precise definition of $R$ and a conformal $R$-module).

Most of the work on representation theory of a Lie conformal superalgebra $R$ of type $(1,0)$ was based on the simple observation that conformal $R$-modules
are closely related to continuous modules over the associated to $R$ annihilation algebra ${\mathcal A}(R)$. This name comes from the fact that 
${\mathcal A}(R)$ consists of the operators that annihilate the vacuum vector of the universal enveloping  vertex algebra of $R$ \cite{K1}.

Recall that, if $R$ is a finitely generated $\C[\de]$-module, then
${\mathcal A}(R)$ is a linearly compact Lie superalgebra
\begin{equation}
{\mathcal A}(R)=R[[y]]/(\de+\de_y)R[[y]],
\label{(1)}
\end{equation}
where $y$ is an even indeterminate, with the (well defined) continuous
bracket
\begin{equation}
[ay^m,by^n]=\sum_{j\geq 0}\binom{m}{j}(a_{(j)}b)y^{m+n-j}.
\label{(2)}
\end{equation}
Since $\de$ commutes with $\de_y$, it defines a continuous derivation of ${\mathcal A}(R)$, and we may consider the so-called extended annihilation algebra
 ${\mathcal A}^e(R)=\C[\de]\ltimes {\mathcal A}(R)$. It is straightforward to see that a
conformal $R$-module $M$ is the same as a continuous ${\mathcal A}^e(R)$-module \cite{CK0}.

In order to go from conformal $R$-modules to continuous ${\mathcal A}(R)$-modules, one needs to assume that $\de$ is an inner derivation of 
${\mathcal A}(R)$:
\begin{equation}
\de=\ad a \,\,\,\mbox{for some}\,a\in
{\mathcal A}(R).
\label{(3)}
\end{equation}
A conformal $R$-module (= ${\mathcal A}^e(R)$-module) $M$ is called coherent
if $(\de-a)M=0$. Thus a continuous module over the linearly compact Lie
superalgebra ${\mathcal A}(R)$ is the same as a coherent conformal $R$-module.

A duality functor on the category of conformal $R$-modules for $R$ of type
$(1,0)$ was constructed in \cite{BKLR}. Therefore, under assumption
(\ref{(3)}), this functor can be transferred to the category of
${\mathcal A}(R)$-modules.

In \cite{CCK} we extended this construction of a duality functor to the category $\mathcal{P}$ of continuous modules with discrete topology over a linearly compact Lie superalgebra $L$ under the assumptions that there exists a Lie conformal superalgebra $RL$ of type $(r,s)$, for which
\begin{equation}
{\mathcal A}(RL)\cong L,
\label{(4)}
\end{equation}
satisfying an assumption analogous to (\ref{(3)}).

The category $\mathcal{P}$ of $L$-modules is similar to the BGG category $\mathcal O$, and, as in category $\mathcal O$, the most important objects in 
$\mathcal P$ are parabolic Verma modules $M(F)$. Recall that, given an
open subalgebra $L_0\subset L$, and a finite-dimensional $L_0$-module $F$, one defines
\[M(F)=\Ind_{L_0}^L F.\]
(In \cite{CCK} these modules are called generalized Verma modules.)
The main result of our paper \cite{CCK} is the computation of the dual to
$M(F)$ $L$-module $M(F)^{\vee}$ if $F$ is finite-dimensional and $L$ has a $D$-conformal structure (see Definition \ref{assumption}), and in particular it satisfies
(\ref{(3)}) and (\ref{(4)}). It turned out that, if $L$ has a $D$-conformal structure,
$M(F)^{\vee}$ is not $M(F^*)$, but $M(F^{\vee})$, where $F^{\vee}$
is the dual $L_0$-module $F^*$  shifted by the following character $\chi$ of $L_0$:
\begin{equation}
\chi(a)=\str(\ad a|_{L/L_0}),\hspace{5mm} a\in L_0.
\label{(5)}
\end{equation}
We proved in \cite{CCK} that many simple linearly compact Lie superalgebras $L$ (classified in 
\cite{K}) with $L_0$ the open subalgebra of minimal codimension, have a $D$-conformal structure, including four (out of five) exceptional $L$. 
However, for the remaining exceptional simple linearly compact Lie superalgebra $L=E(4,4)$, one could construct in a rather natural way a Lie conformal superalgebra of type $(4,0)$ satisfying (\ref{(3)}) and (\ref{(4)}), although it does not provide a $D$-conformal structure for $L$.

The main result of the present paper is a construction of a Lie conformal superalgebra $RE(4,4)$ of type $(4,4)$, whose annihilation algebra is isomorphic to $E(4,4)$, and which provides a $D$-conformal structure for $E(4,4)$ (Corollary \ref{properties} and Corollary \ref{iso}). As a result, we obtain a duality functor on the category $\mathcal P$ for $L=E(4,4)$ and $L_0$ open subalgebra of minimal codimension, for which $M(F)^\vee$ is isomorphic to $M(F^*)$ since the shift $\chi$ defined in (\ref{(5)}) is zero. Along the way we provide an explicit embedding of $E(4,4)$ in the Lie superalgebra $W(4,4)$ of all continuous derivations of the superalgebra of formal power series in four even and four odd indeterminates.

The construction of a duality functor is important for the classification of reducible (i.e., degenerate) parabolic Verma modules. In \cite{CC} and \cite{CCK2} we used this functor in the classification of degenerate parabolic Verma modules over $E(5,10)$. We hope that the duality of modules over $E(4,4)$, constructed in the present paper, will lead to the classification of such modules over $E(4,4)$, the only exceptional linearly compact Lie superalgebra, for which this classification is not known.

The base field $\C$ is assumed to be a field of characteristic 0, and tensor products are assumed to be over $\C$.

\section{Lie conformal superalgebras of type $(r,s)$ and the duality functor for their modules}
Here we recall some fundamental constructions of the theory of Lie conformal superalgebras of type $(r,s)$ and their conformal modules.

Let $r$ and $s$ be two nonnegative integers.
We will use several sets of $r+s$ variables such as $\lambda_1,\ldots,\lambda_{r+s}$, $\de_1,\ldots,\de_{r+s}$, $y_1,\ldots,y_{r+s}$. We will always assume that variables with indices $1,\ldots,r$ are even and variables with indices $r+1,\ldots,r+s$ are odd, and accordingly we let $p_i=\bar{0}$ if $i=1,\ldots,r$ and $p_i=\bar{1}$ if $i=r+1,\ldots,r+s$,
so that $\lambda_i\lambda_j=(-1)^{p_ip_j}\lambda_j\lambda_i$ and
similarly for the $\de_i$ and $y_i$. We will also use bold letters such as $\boldsymbol{\la}$ or $\deb$ or $\yb$ to denote the set of corresponding variables.
We let $\C[\boldsymbol{\la}]=\C[\lambda_1,\ldots,\lambda_r]\otimes {\textrm{\raisebox{0.6mm}{\footnotesize $\bigwedge$}}}(\lambda_{r+1},\ldots,\lambda_{r+s})$ and we similarly define $\inlinewedge[\deb]$ or $\inlinewedge[\yb]$. The completion
$\inlinewedge[[\yb]]$ of $\inlinewedge[\yb]$ is the algebra of formal power series in $\yb$.

If $R$ is a $\mathbb Z/2\mathbb{Z}$-graded vector space we give to $\inlinewedge [\lab]\otimes R$ the structure of a $\mathbb Z/2\mathbb{Z}$-graded $\inlinewedge [\lab]$-bimodule by letting $\lambda_i(P(\lab)\otimes a)=\lambda_iP(\lab)\otimes a$ and $(P(\lab)\otimes a)\lambda_i=(-1)^{p_ip(a)} P(\lab)\lambda_i\otimes a$, where $p(a)\in \mathbb Z/2\mathbb{Z}$ denotes the parity of $a$. We will usually drop the tensor product symbol and simply write $P(\lab)a$ instead of $P(\lab)\otimes a$.

\begin{definition}\label{defsupconf}
	A {\it{Lie conformal superalgebra of type}} $(r,s)$ is a $\mathbb Z/2\Z$ graded $\inlinewedge[\deb]$-bimodule $R$ such that $a\de_i=(-1)^{p_ip(a)}\de_ia$ for all $a\in R$ and $i\in \{1,\ldots,r+s\}$, endowed with a $\lab$-bracket, i.e.\ a $\mathbb Z/2\Z$-graded linear map $R\otimes R\rightarrow \inlinewedge[\lab]\otimes R$, denoted by $a\otimes b\mapsto [a_\lab b]$, that satisfies the following properties:
		\begin{align}& [(\de_i a)_\lab b]=-\lambda_i[a_\lab b],\,\, [a_\lab (b\de_i)]=[a_\la b](\de_i+\lambda_i);& \label{defsupconf1}\\
		&[b_\la a]=-(-1)^{p(a)p(b)}[a_{-\la-\deb}b];&\label{defsupconf3}\\
		 &[a_\la [b_\mub c]] = [[a_\la b]_{\la+\mub}c]+(-1)^{p(a)p(b)}[b_\mub[a_\la c]].&\label{defsupconf4}
		 \end{align}
	 	
\end{definition}
We refer to Property \eqref{defsupconf1}   as  the conformal sesquilinearity, to Property \eqref{defsupconf3} as the conformal skew-symmetry and to
Property \eqref{defsupconf4} as the conformal Jacobi identity.

We introduce 
the following notation. 
If $K=(k_1,\ldots,k_t)$ is any sequence with entries in $\{1,\ldots,r+s\}$ we let
%
\[\la_K=\lambda_{k_1}\lambda_{k_2}\cdots \lambda_{k_t},
\]
and we similarly define $\yb_K$, $\xb_K$ and so on. 
If $K=\emptyset$, we let $\la_K=1$.
We also let $p_K=p_{k_1}+\cdots +p_{k_t}$ and so $p(\la_K)=p_K$.


Starting from a Lie conformal superalgebra $R$ of type $(r,s)$ one can construct a new Lie conformal superalgebra $\tilde R$ of the same type, called the {\it affinization} of $R$ and defined as follows. Let 
$\tilde R=R\otimes \inlinewedge[[\yb]]$. We consider $\tilde R$ as a $\inlinewedge[[\yb]]$-bimodule and also as a $\inlinewedge[\deb_\yb]=\inlinewedge[\de_{y_1},\ldots,\de_{y_{r+s}}]$-bimodule letting
\[
 \de_{y_i} (a\yb_K)=(-1)^{p_ip(a)}a(\de_{y_i} \yb_K)=(-1)^{p_i(p(a)+p_K)}a\yb_K \de_{y_i}
\]
with $\la$-bracket given by
\begin{equation}\label{lambdatilde}
[(\yb_K a)_\la (b \yb_N)]=\big(\yb_K[a_{\la+\deb_{\yb}}b]\big)\yb_N.
\end{equation}
The following proposition holds.
\begin{proposition}\cite[Proposition 2.3]{CCK} The $\inlinewedge[\tilde{\deb}]$-module $\tilde R$ with $\tilde \deb= \deb+\deb_\yb $ and $\la$-bracket given by \eqref{lambdatilde} is a Lie conformal superalgebra of the same type as $R$.	
\end{proposition}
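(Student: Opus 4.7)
The plan is to verify directly the three defining axioms of Definition \ref{defsupconf} for $\tilde R$ with respect to $\tilde{\deb}=\deb+\deb_\yb$, reducing each to the corresponding axiom for $R$ together with the Leibniz-type rule of $\deb_\yb$ on the $\yb$-factors. As a preliminary step I would verify that $\tilde R$ is a $\inlinewedge[\tilde\deb]$-bimodule satisfying $a\tilde\de_i=(-1)^{p_ip(a)}\tilde\de_i a$; this reduces to the analogous statements for $\de_i$ and $\de_{y_i}$ separately, which hold by construction since $\de_i$ only affects the $R$-factor of $\yb_K a$ while $\de_{y_i}$ only affects the $\yb$-factor, and the two actions super-commute.

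For conformal sesquilinearity $[(\tilde\de_i(\yb_K a))_\la (b\yb_N)]=-\lambda_i[(\yb_K a)_\la(b\yb_N)]$, I would split $\tilde\de_i=\de_i+\de_{y_i}$ and compute the two contributions via \eqref{lambdatilde}. The $\de_i$-contribution, combined with sesquilinearity in $R$ at parameter $\la+\deb_\yb$, produces both the desired $-\lambda_i$-term and an unwanted $-\de_{y_i}$-term acting on the right-hand $\yb_N$; the $\de_{y_i}$-contribution, via the derivation rule that pulls $\de_{y_i}$ through $\yb_K$ onto $\yb_N$, cancels this extra term exactly. Sesquilinearity on the second argument is checked symmetrically.

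For skew-symmetry I would compute $[(b\yb_N)_\la(\yb_K a)]$ via \eqref{lambdatilde} and then apply skew-symmetry in $R$ to $[b_{\la+\deb_\yb}a]=-(-1)^{p(a)p(b)}[a_{-\la-\deb-\deb_\yb}b]$; a careful Koszul-sign count, combined with the bimodule relations moving $\yb_N$ and $\yb_K$ past the bracket, identifies the result with $-(-1)^{p(\yb_N b)p(\yb_K a)}[(\yb_K a)_{-\la-\tilde\deb}(b\yb_N)]$. The Jacobi identity is obtained by evaluating both sides of \eqref{defsupconf4} on a triple $(\yb_K a,\yb_M b,\yb_N c)$, expanding via \eqref{lambdatilde}, and matching the three resulting expressions using the Jacobi identity in $R$ with the various $\la,\mub$-parameters shifted by the appropriate $\deb_\yb$-variables.

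The real content, and the main obstacle, is bookkeeping rather than conceptual: at every step one must commute $\yb$'s, $\deb_\yb$'s, $\la$'s, and $\mub$'s past each other and past elements of $R$ with the correct Koszul signs, and check that the $\deb_\yb$-shifts generated by successive affinized brackets (as on the right-hand side of Jacobi, where $[[a_\la b]_{\la+\mub}c]$ produces iterated shifts acting on the outer $\yb$-factor) assemble into exactly the same shifted parameters produced on the other side. Once this super-sign management is carried out carefully, the argument is formally identical to the $r=1,s=0$ case treated in \cite{K1}.
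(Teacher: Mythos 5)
The paper does not prove this statement itself (it is quoted from \cite[Proposition 2.3]{CCK}), and your plan is exactly the standard argument used there: a direct verification of the three axioms, reducing each to the corresponding axiom of $R$ applied at the $\deb_\yb$-shifted parameters, with the only real content being the Koszul-sign and shift bookkeeping that you correctly identify as the crux. One small imprecision to fix when you carry it out: with the convention forced by \eqref{lambdatilde} and the annihilation bracket \eqref{(2)}, the $\deb_\yb$ appearing in the shifted bracket differentiates the \emph{left} factor $\yb_K$, so in your sesquilinearity step the unwanted $-\de_{y_i}$-term and the $\de_{y_i}$-contribution of $\tilde\de_i$ both produce $\partial_{y_i}\yb_K$ (and cancel there), rather than anything landing on $\yb_N$.
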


\begin{definition}\label{annihilationalgebra}
	Given a Lie conformal superalgebra $R$ of type $(r,s)$, the {\it{annihilation algebra}} associated to $R$ is the  vector super space
	\[
	\mathcal A(R)=\tilde R /\tilde \deb \tilde R,
	\]
	with bracket given by
	\[
	[\yb _K a,b\yb _N]=[(\yb_K a)_\la (b\yb_N)]_{|\la=0}.
	\]
	
\end{definition}

\begin{proposition}\cite[Proposition 2.5]{CCK}
	$\mathcal A(R)$ is a Lie superalgebra.
\end{proposition}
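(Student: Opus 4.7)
The plan is to derive each Lie superalgebra axiom for the bracket on $\mathcal A(R)=\tilde R/\tilde{\deb}\tilde R$ from the corresponding conformal axiom on $\tilde R$, by specialising the $\la$-variables at $\la=0$ and passing to the quotient. The crucial observation throughout is that any polynomial expression in the $\tilde\partial_i$ with coefficients in $\tilde R$ is congruent modulo $\tilde{\deb}\tilde R$ to its constant term, so the $\la$-dependence disappears in a controlled way once we descend to $\mathcal A(R)$.

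First I would check that the bracket $[\bar u,\bar v]:=[u_{\la} v]|_{\la=0}\pmod{\tilde{\deb}\tilde R}$ is well defined. By the conformal sesquilinearity on $\tilde R$, $[(\tilde\partial_i u)_{\la} v]=-\lambda_i[u_{\la} v]$, which vanishes at $\la=0$; for the right slot, $[u_{\la}(v\tilde\partial_i)]=[u_{\la} v](\tilde\partial_i+\lambda_i)$, so at $\la=0$ this becomes $[u_0 v]\tilde\partial_i$, which lies in $\tilde{\deb}\tilde R$ after one use of the bimodule rule $a\tilde\partial_i=(-1)^{p_ip(a)}\tilde\partial_i a$. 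Bilinearity over $\C$ and compatibility with the $\mathbb Z/2\Z$-grading are inherited directly from the conformal $\la$-bracket.

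Next I would establish skew-symmetry. Conformal skew-symmetry in $\tilde R$ reads $[v_{\la} u]=-(-1)^{p(u)p(v)}[u_{-\la-\tilde{\deb}} v]$; specialising at $\la=0$ gives $[v_0 u]=-(-1)^{p(u)p(v)}[u_{-\tilde{\deb}} v]$. Expanding $[u_{\la} v]=\sum_I \lambda_I(u_{(I)} v)$ and substituting $\la\mapsto -\tilde{\deb}$, every multi-index $I\neq\emptyset$ produces a term of the form $\tilde\partial_{i_1}\cdots\tilde\partial_{i_k}(u_{(I)} v)$, which belongs to $\tilde{\deb}\tilde R$; hence modulo $\tilde{\deb}\tilde R$ only the constant term $[u_0 v]$ survives, yielding the desired skew-symmetry of $[\,,\,]$ on $\mathcal A(R)$.

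Finally I would verify the Jacobi identity by specialising the conformal Jacobi identity $[u_{\la}[v_{\mub} w]]=[[u_{\la} v]_{\la+\mub} w]+(-1)^{p(u)p(v)}[v_{\mub}[u_{\la} w]]$ at $\la=\mub=0$ and reducing modulo $\tilde{\deb}\tilde R$; the outer bracket on the right-hand side becomes the quotient bracket of $[u_0 v]$ with $w$, and the identity transfers verbatim. The main obstacle is the sign bookkeeping arising from the $\mathbb Z/2\Z$-grading of the $\tilde\partial_i$ and of the bimodule structure, together with the step in skew-symmetry that requires each product $\tilde\partial_{i_1}\cdots\tilde\partial_{i_k}(u_{(I)} v)$ to genuinely lie in $\tilde{\deb}\tilde R$; once this reduction is carried out carefully using the bimodule compatibility, the remaining manipulations collapse to their classical analogues.
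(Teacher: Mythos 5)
Your proof is correct and follows essentially the same route as the argument in \cite[Proposition 2.5]{CCK} that the paper cites: well-definedness of the bracket on $\tilde R/\tilde{\deb}\tilde R$ via sesquilinearity, skew-symmetry from the substitution $\la\mapsto-\la-\tilde{\deb}$ with all nonconstant terms absorbed into $\tilde{\deb}\tilde R$, and the Jacobi identity by specialising $\la=\mub=0$ in the conformal Jacobi identity for the affinization $\tilde R$. No gaps to report.
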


\begin{definition} A Lie conformal superalgebra $R$ of type $(r,s)$ is called $\Z$-graded if $\inlinewedge[\la]\otimes R=\prod_{d\in\Z} (\inlinewedge[\la]\otimes R)_d$, where $(\inlinewedge[\la]\otimes R)_d$ denotes the
	homogeneous component of degree $d$, and for every homogeneous elements $a,b\in \inlinewedge[\la]\otimes R$ one has:
	\begin{itemize}
		\item[i)] $\deg(\lambda_i a)=\deg (a)-2$;
		\item[ii)] $\deg(\partial_i a)=\deg (a)-2$;
		\item[iii)] $\deg[a_\la b]=\deg(a)+\deg(b)$.
	\end{itemize}
\end{definition}

Notice that if $R$ is a $\Z$-graded Lie conformal superalgebra of type $(r,s)$ then its annihilation algebra $\mathcal A(R)$ inherits a
$\Z$-gradation by setting 
\begin{equation}\label{ten}\deg(a\yb_K)=\deg(a)+2\,\ell(K),\end{equation} where $\ell(K)$ is the number of entries in $K$. 

\begin{definition}
	A  {\it{conformal module}} $M$ over a Lie conformal superalgebra $R$ of type $(r,s)$ is a ${\mathbb Z}/2\Z$-graded 
	$\inlinewedge[\deb]$-module with a ${\mathbb Z}/2\Z$-graded linear map
	$$R\otimes M\rightarrow \displaywedge[\la]\otimes M, ~~
	a\otimes v\mapsto a_{\la}v$$
	such that
	\begin{itemize}
		\item[(M1)] $(\de_i a)_{\la}v=[\de_i,a_{\la}]v=-\lambda_ia_{\la}v$;
		\item[(M2)] $[a_{\la},b_{\mub}]v=a_{\la}(b_{\mub}v)-(-1)^{p(a)p(b)}b_{\mub}(a_{\la})v=
		(a_{\la}b)_{\la+\mub}v.$
	\end{itemize} 
\end{definition}

\begin{definition} The conformal dual $M^\vee$ of a conformal $R$-module $M$ is defined as
$$M^\vee=\{f_{\la}: M\rightarrow \displaywedge[\la]~|~ f_{\la}(\de_i m)=(-1)^{p_ip(f)}\lambda_i f_{\la}(m),
~{\mbox{for all}}~ m\in M\mbox{ and }i=1,\ldots,r+s\},$$
with the structure of $\inlinewedge[\deb]$-module given by $(\de_i f)_{\la}(m)=-\lambda_i f_{\la}(m)$, and
with the following $\la$-action of $R$:
$$(a_{\la}f)_{\mub}m=-(-1)^{p(a)p(f)}f_{\mub-\la}(a_{\la}m), ~a\in R, ~m\in M.$$
Here by $p(f)$ we denote the parity of the map $f_\la$.
\end{definition}

\begin{proposition}\cite[Proposition 3.7]{CCK}\label{Dual1} If $M$ is a conformal $R$-module, then $M^\vee$ is a conformal $R$-module.
\end{proposition}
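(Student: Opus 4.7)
The plan is to verify three things, which together establish the proposition: first, that the defined $\la$-action sends a pair $(a,f)$ into $\inlinewedge[\la]\otimes M^\vee$, i.e.\ every coefficient of a monomial in $\la$ of $a_\la f$ satisfies the sesquilinearity condition defining $M^\vee$; second, axiom (M1) on $M^\vee$; and third, axiom (M2) on $M^\vee$. Each of these reduces to the corresponding identity on $M$ combined with the defining sesquilinearity of $f\in M^\vee$.

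For the first point, I substitute $\de_i m$ into the definition
\[(a_\la f)_\mub m=-(-1)^{p(a)p(f)}f_{\mub-\la}(a_\la m)\]
and use (M1) on $M$ to rewrite $a_\la(\de_i m)$ in terms of $\de_i(a_\la m)$ and $\lambda_i a_\la m$. Applying $f_{\mub-\la}$ converts the inner $\de_i$ into multiplication by $\mu_i-\lambda_i$ via the defining relation for $f$; the two $\lambda_i$ contributions cancel, producing $(-1)^{p_i(p(a)+p(f))}\mu_i(a_\la f)_\mub m$, which is exactly the condition for $a_\la f$ to lie in $\inlinewedge[\la]\otimes M^\vee$, since its parity is $p(a)+p(f)$.

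For (M1), substitute $(\de_i a)_\la m = -\lambda_i a_\la m$ into the definition of $((\de_i a)_\la f)_\mub m$; after pulling $\lambda_i$ out past $f_{\mub-\la}$ (with sign $(-1)^{p_ip(f)}$, precisely matching the prescribed action of $\de_i$ on $M^\vee$), one obtains $-\lambda_i(a_\la f)_\mub m$, as required. For (M2), expand $(a_\la(b_\mub f))_\nub m$ and $(b_\mub(a_\la f))_\nub m$ by iterating the defining formula; their graded commutator equals $\pm f_{\nub-\la-\mub}([a_\la,b_\mub]m)$, and (M2) on $M$ rewrites $[a_\la,b_\mub]m$ as $(a_\la b)_{\la+\mub}m$, yielding $((a_\la b)_{\la+\mub}f)_\nub m$.

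The main obstacle is not conceptual but clerical: one juggles three sets of (partly odd) indeterminates $\la,\mub,\nub$, performs substitutions such as $\mub\mapsto\mub-\la$ inside mixed polynomial--Grassmann algebras, and must carefully track how the parity $p(f)$ enters at each step through the commutation of $f$-evaluations with odd $\lambda_i$, $\mu_i$, $\nu_i$. Once a consistent sign convention is fixed by the defining relation $f_\la(\de_i m)=(-1)^{p_ip(f)}\lambda_i f_\la(m)$, the three verifications become transparent transcriptions of the corresponding identities on $M$, exactly parallel to the contragredient construction for ordinary Lie superalgebra modules with Koszul signs inserted.
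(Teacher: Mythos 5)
Your proposal is correct and follows the same route as the cited proof in [CCK]: a direct verification that each $\lambda$-coefficient of $a_\la f$ satisfies the defining sesquilinearity of $M^\vee$ in $\mub$ (using (M1) on $M$ and the cancellation $(\mu_i-\lambda_i)+\lambda_i=\mu_i$), followed by checking (M1) and (M2) on $M^\vee$ by transcribing the corresponding identities on $M$ with the Koszul signs dictated by $f_\la(\de_i m)=(-1)^{p_ip(f)}\lambda_i f_\la(m)$. No gaps; the only content is the bookkeeping you describe.
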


\begin{proposition}\cite[Proposition 3.8]{CCK}\label{Dual2} Let $T: M\rightarrow N$ be a morphism of conformal $R$-modules i.e. a linear map such that:
\begin{enumerate}
\item $T(\de_i m)=(-1)^{p_ip(T)}\de_iT(m)$,
\item $T(a_{\la}m)=(-1)^{p(a)p(T)}a_{\la}T(m)$,
\end{enumerate}
then the map $T^\vee: N^\vee \rightarrow M^\vee$ given by: $(T^\vee(f))_{\la}m=(-1)^{p(T)p(f)}f_{\la}T(m)$
is a morphism of conformal $R$-modules.
\end{proposition}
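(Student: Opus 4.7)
The statement is a direct verification: given the explicit formula $(T^\vee(f))_\la m = (-1)^{p(T)p(f)} f_\la T(m)$, I need to check three things: (a) that $T^\vee(f)$ really lies in $M^\vee$, i.e.\ that it satisfies the conformal sesquilinearity condition defining $M^\vee$; (b) that $T^\vee$ intertwines the $\inlinewedge[\deb]$-action up to the standard parity sign; and (c) that $T^\vee$ intertwines the $\la$-action of $R$ up to the standard parity sign. Before starting, I would note that the formula forces $T^\vee$ to have parity $p(T^\vee)=p(T)$, since $(T^\vee(f))_\la m$ has the same parity as $f_\la T(m)$, namely $p(T)+p(f)+p(m)$.

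For step (a), I would apply $(T^\vee(f))_\la$ to $\de_i m$, use hypothesis (1) on $T$ to move $\de_i$ across $T$ (picking up $(-1)^{p_ip(T)}$), and then use that $f\in N^\vee$ to convert $f_\la(\de_i T(m))$ into $(-1)^{p_ip(f)}\lambda_i f_\la T(m)$. Combining this with the sign in the definition of $T^\vee$ gives the required factor $(-1)^{p_i(p(T)+p(f))}\lambda_i$, confirming $T^\vee(f)\in M^\vee$. Step (b) is essentially formal: starting from $(T^\vee(\de_i f))_\la m$ and applying the definition of $T^\vee$, one gets $(-1)^{p(T)(p_i+p(f))}(\de_i f)_\la T(m) = -(-1)^{p(T)(p_i+p(f))}\lambda_i f_\la T(m)$, which matches $(-1)^{p_ip(T)}(\de_i T^\vee(f))_\la m$ computed from the $\inlinewedge[\deb]$-action on $M^\vee$.

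The slightly more delicate step is (c). Here I would expand the left-hand side $(T^\vee(a_\la f))_\mub m$ by viewing $a_\la f\in \inlinewedge[\la]\otimes N^\vee$ of parity $p(a)+p(f)$, so that the defining formula gives $(-1)^{p(T)(p(a)+p(f))}(a_\la f)_\mub T(m)$, and then use the $\la$-action formula $(a_\la f)_\mub T(m)=-(-1)^{p(a)p(f)}f_{\mub-\la}(a_\la T(m))$. On the right-hand side $(-1)^{p(a)p(T)}(a_\la T^\vee(f))_\mub m$ I would expand the outer $\la$-action first, producing $f_{\mub-\la}(T(a_\la m))$, and then use hypothesis (2) on $T$ to rewrite $T(a_\la m)=(-1)^{p(a)p(T)}a_\la T(m)$. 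Careful collection of the four sign contributions ($p(a)p(f)$, $p(T)p(f)$, $p(a)p(T)$, and the sign from the $\la$-action itself) shows that both sides agree.

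The main potential obstacle is purely bookkeeping: keeping the parities straight under the convention $p(T^\vee)=p(T)$ and ensuring that the $\inlinewedge[\la]$-linear extension of $T^\vee$ to $\inlinewedge[\la]\otimes N^\vee$ introduces the expected parity signs when moving $T$ past $\la_i$'s. There is no conceptual subtlety beyond this, and no Jacobi-type identity is needed, since both $M^\vee$ and $N^\vee$ are already known to be conformal modules by Proposition~\ref{Dual1}.
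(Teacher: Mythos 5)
Your verification is correct, and it is the standard direct sign-check; note that the paper itself offers no proof of this proposition, simply citing it as Proposition 3.8 of [CCK], so there is no in-paper argument for your approach to diverge from. The three steps you outline — membership of $T^\vee(f)$ in $M^\vee$, compatibility with the $\de_i$-action, and compatibility with the $\la$-action — do close up exactly as you describe once one fixes the conventions $p(T^\vee)=p(T)$ and $p(T^\vee(f))=p(T)+p(f)$ and applies hypothesis (2) in the form $f_{\mub-\la}(T(a_\la m))=(-1)^{p(a)p(T)}f_{\mub-\la}(a_\la T(m))$.
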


\begin{theorem} The assignment $M\mapsto M^\vee$, $T\mapsto T^\vee$ for every conformal $R$-module $M$ and every morphism $T: M\rightarrow N$ of conformal $R$-modules
provides a contravariant functor of the category of conformal $R$-modules.
\end{theorem}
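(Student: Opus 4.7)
The plan is to deduce the theorem from the two propositions that immediately precede it. Proposition \ref{Dual1} already guarantees that $M \mapsto M^\vee$ lands in the category of conformal $R$-modules, and Proposition \ref{Dual2} guarantees that $T \mapsto T^\vee$ produces a morphism of conformal $R$-modules. It therefore suffices to verify the two functoriality axioms: preservation of identities and the contravariant composition law.

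For identities, I observe that $p(\mathrm{id}_M) = \bar{0}$, so the defining formula $(T^\vee(f))_\la m = (-1)^{p(T)p(f)} f_\la T(m)$ collapses to $(\mathrm{id}_M^\vee(f))_\la m = f_\la m$ for all $f\in M^\vee$ and $m\in M$, i.e.\ $\mathrm{id}_M^\vee = \mathrm{id}_{M^\vee}$. The $\inlinewedge[\deb]$-linearity condition in the definition of $M^\vee$ is automatically respected since $f_\la$ is unchanged.

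For composition, I would take homogeneous composable morphisms $S\colon L\to M$ and $T\colon M\to N$ and evaluate both sides of the expected identity on homogeneous $f \in N^\vee$ and $\ell \in L$. Unwinding directly,
\[
((T\circ S)^\vee(f))_\la \ell = (-1)^{(p(T)+p(S))p(f)}\, f_\la\, T(S(\ell)),
\]
while, using that $p(T^\vee(f)) = p(T)+p(f)$ (read off from the formula in Proposition \ref{Dual2}),
\[
(S^\vee(T^\vee(f)))_\la \ell = (-1)^{p(S)(p(T)+p(f)) + p(T)p(f)}\, f_\la\, T(S(\ell)).
\]
Comparing the exponents yields the super-contravariance identity $(T\circ S)^\vee = (-1)^{p(S)p(T)}\, S^\vee \circ T^\vee$, which is the standard Koszul-sign form of contravariance for a $\mathbb Z/2\Z$-graded category; extending from homogeneous to arbitrary morphisms by linearity completes the check.

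The only delicate point is the parity bookkeeping in the composition law, but it is entirely mechanical once one records $p(T^\vee(f))=p(T)+p(f)$ and reduces to homogeneous morphisms by linearity. No structural property of conformal modules beyond what is already in Propositions \ref{Dual1} and \ref{Dual2} is needed.
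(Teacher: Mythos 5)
Your proposal is correct and follows exactly the route the paper takes: reduce to Propositions \ref{Dual1} and \ref{Dual2} and then verify $(\mathrm{id}_M)^\vee=\mathrm{id}_{M^\vee}$ together with the signed composition law $(TS)^\vee=(-1)^{p(S)p(T)}S^\vee T^\vee$, which is precisely the ``easy check'' the paper leaves to the reader. Your sign bookkeeping, based on $p(T^\vee(f))=p(T)+p(f)$, is accurate.
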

\begin{proof} Due to Propositions \ref{Dual1} and \ref{Dual2}, one only needs to verify that
\begin{itemize}
\item[i)] $(id_M)^\vee=id_{M^\vee}$;
\item[ii)] given two morphisms $T: M\rightarrow N$ and $S:N\rightarrow P$ of conformal $R$-modules, one has: $(ST)^\vee=(-1)^{p(S)p(T)}T^\vee S^\vee$.
\end{itemize} 
This easy check is left to the reader.
\end{proof}

As explained in the Introduction,
conformal $R$-modules are used to study representations of the linearly compact Lie superalgebra $\mathcal A(R)$.
Let $D=\langle \partial_{y_1},\dots, \partial_{y_{r+s}}\rangle$, and consider the semi-direct sum of Lie superalgebras 
${\mathcal A}^e(R)=D\ltimes  \mathcal A(R)$.
This is a natural generalization of the so-called extended annihilation algebra introduced in \cite{CK0}. A key observation made in \cite{CK0} is that conformal $R$-modules are exactly the same as continuous (called conformal in \cite{BKLR}) modules over the extended annihilation algebra. The following proposition extends this observation to our context.

\begin{proposition}\cite[Proposition 3.4]{CCK}
\label{modcorr}
A conformal $R$-module is precisely a continuous module over the Lie superalgebra ${\mathcal A}^e(R)$, i.e.\ a module $M$ such that for every $v\in M$ and every $a\in R$, $(\yb_Ka).v\neq 0$ only for a finite number of $K$. 
The equivalence between the two structures is provided by the following relations:
\begin{itemize}
\item $a_\la v=\sum_{K}(-1)^{p_K}\frac{\la_{\bar K}}{f(K)} (\yb_Ka).v$;
\item $\de_i v=-\de_{y_i}.v$.
\end{itemize}
Here the summation is taken over $K$, viewed up to permutation of its entries, $\bar{K}$ is obtained from $K$ by reversing the order of entries, and $f(K)=\prod_i m_i(K)!$, where $m_i(K)$ is the multiplicity of $i$ in $K$.
\end{proposition}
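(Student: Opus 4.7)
The plan is to set up a mutually inverse dictionary between the two structures on $M$ using the stated formulas. The expansion
\[a_\la v=\sum_K (-1)^{p_K}\frac{\la_{\bar K}}{f(K)}(\yb_Ka).v\]
writes $a_\la v\in\C[\la]\otimes M$ in the monomial basis $\{\la_K\}$, with divided powers on the even variables; the factor $f(K)$ reflects that divided powers in $\C[\la]$ are dual to ordinary monomials in $\{\yb_K\}$. Given a conformal $R$-module, reading off the coefficient of each $\la_{\bar K}$ defines the action of $\yb_Ka\in\mathcal A(R)$ on $M$, and we define the $D$-action by $\de_{y_i}.v:=-\de_i v$. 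Conversely, given a continuous $\mathcal A^e(R)$-module, the hypothesis that $(\yb_Ka).v=0$ for all but finitely many $K$ guarantees that the right-hand side of the formula actually lies in $\C[\la]\otimes M$, providing a candidate $\la$-action. Well-definedness on the quotient $\tilde R/\tilde\deb\tilde R$ must be checked: the elements $\tilde\deb\tilde R$ must act by zero, which will be a direct consequence of axiom (M1) once it is verified.

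First I would verify axiom (M1). The sesquilinearity $(\de_i a)_\la v=-\la_i a_\la v$ translates, after comparing coefficients in the expansion above, into the defining relation $\de_i a+\de_{y_i}(a)\equiv 0$ modulo $\tilde\deb\tilde R$ that is built into the annihilation algebra by \eqref{(1)}; the second half, $[\de_i,a_\la]v=-\la_i a_\la v$, translates into the statement that $\de_{y_i}$ acts on $M$ as a graded derivation compatible with the $\mathcal A(R)$-action, i.e.\ precisely the semi-direct structure of $\mathcal A^e(R)=D\ltimes \mathcal A(R)$. Both are a direct bookkeeping of super-signs from the definitions.

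The principal obstacle is matching axiom (M2),
\[a_\la(b_\mub v)-(-1)^{p(a)p(b)}b_\mub(a_\la v)=(a_\la b)_{\la+\mub}v,\]
with the Lie bracket of $\mathcal A(R)$ given by $[\yb_Ka,\yb_Nb]=[(\yb_Ka)_\la(\yb_Nb)]_{|\la=0}$ and the affinization $\la$-bracket \eqref{lambdatilde}. Substituting the expansion into both sides and comparing coefficients of each monomial $\la_{\bar K}\mub_{\bar N}$, the shift $\la+\mub$ on the right together with the $\deb_{\yb}$ inside \eqref{lambdatilde} must reproduce the full combinatorial formula \eqref{(2)} for $[ay^m,by^n]$. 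The delicate part is tracking the super-signs produced when the odd $\la_i,\mu_i,\de_i,y_i$ are permuted past one another and when $K$ is reversed to $\bar K$, and identifying the factors $f(K)f(N)$ as exactly those arising from the binomial coefficients $\binom{m}{j}$ in \eqref{(2)} under the divided-power pairing. This purely combinatorial verification, which extends the classical $(1,0)$ case of \cite{CK0}, is the heart of the argument.

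Finally, continuity on the $\mathcal A^e(R)$ side is equivalent to polynomiality on the conformal side: by definition $a_\la v\in\C[\la]\otimes M$ is a polynomial, so involves only finitely many $\la_K$, forcing all but finitely many $(\yb_Ka).v$ to vanish; conversely, such finiteness makes the sum defining $a_\la v$ a polynomial. The construction is manifestly natural in $M$, so the two structures on $M$ correspond bijectively, proving the equivalence.
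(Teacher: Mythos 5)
The paper does not prove this proposition: it is quoted verbatim from \cite[Proposition 3.4]{CCK}, so there is no in-paper argument to compare against. Judged on its own, your proposal correctly identifies the standard strategy (the one used in \cite{CCK} and, for type $(1,0)$, in \cite{CK0}): the formula for $a_\la v$ is a coefficient-extraction dictionary in the divided-power basis $\{\la_{\bar K}/f(K)\}$, sesquilinearity corresponds to the relation $\tilde\deb\tilde R=0$ defining the quotient, (M2) corresponds to the bracket of ${\mathcal A}^e(R)$, and continuity corresponds to polynomiality of $a_\la v$. The direction of each translation is right, and the observation that well-definedness on $\tilde R/\tilde\deb\tilde R$ is exactly the content of (M1) is the correct organizing remark.

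The gap is that the proposal never performs the verification it correctly identifies as ``the heart of the argument.'' The equivalence of (M2) with the bracket relations, and even the (M1) step, are asserted to be ``direct bookkeeping'' and ``purely combinatorial,'' but in the super setting with odd $\la_i$, $\de_i$, $y_i$ this bookkeeping is precisely where the content lies: one must check that the sign $(-1)^{p_K}$ and the reversal $K\mapsto\bar K$ in the stated formula are exactly what make the two sides of (M2) match the bracket $[\yb_Ka,b\yb_N]=[(\yb_Ka)_\la(b\yb_N)]_{|\la=0}$ computed via \eqref{lambdatilde}, and that the factors $f(K)$ correctly absorb the multinomial coefficients produced when $\deb_\yb$ is substituted for $\la$ and expanded against the monomials $\yb_N$. (Your appeal to formula \eqref{(2)} is also slightly off target: that formula is stated only for type $(1,0)$; in type $(r,s)$ the bracket must be taken from Definition \ref{annihilationalgebra}, which is what actually has to be matched.) Without carrying out at least this sign-and-coefficient computation --- for instance by checking the coefficient of a fixed monomial $\la_{\bar K}\mub_{\bar N}$ on both sides of (M2) --- what you have is a correct proof plan rather than a proof; an incorrect sign convention in the stated formula would pass every check you actually perform.
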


\begin{definition}\label{regular}
A Lie conformal superalgebra $R$ of type $(r,s)$ is called  \emph{regular} if the subspace $D$ of ${\mathcal A}^e(R)$ consists of inner derivations of ${\mathcal A}(R)$. In this case for each $x\in D$ there exists $a_x\in {\mathcal A}(R)$ such that
$x-a_x$ is a central element of ${\mathcal A}^e(R)$. 
A conformal $R$-module $M$ is called \emph{coherent} if all elements $x-a_x$ act trivially on $M$.
\end{definition}

It follows from Proposition \ref{modcorr} that a coherent $R$-module is precisely a continuous module over $\mathcal A(R)$ (see also \cite[\S 3]{CCK}).
\begin{proposition}\label{coherent}\cite[Proposition 3.7]{CCK} 
Let $R$ be regular and let $M$ be a coherent conformal $R$-module. Then $M^\vee$ is also coherent. 
\end{proposition}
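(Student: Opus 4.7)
The plan is to verify directly that each central element $z_i:=\partial_{y_i}-a_i\in\mathcal{A}^e(R)$, where $a_i:=a_{\partial_{y_i}}$ for $i=1,\ldots,r+s$, acts as zero on $M^\vee$. Via the dictionary of Proposition \ref{modcorr} together with the identification $\partial_i=-\partial_{y_i}$ on every conformal module, one has $(\partial_{y_i}\cdot f)_\mub(m)=\mu_i f_\mub(m)$, so coherence of $M^\vee$ is equivalent to proving
\[
(a_i\cdot f)_\mub(m)=\mu_i f_\mub(m)
\]
for all $f\in M^\vee$ and $m\in M$.

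I would attack this by direct computation from two basic inputs: the defining formula $(a_\la f)_\mub(m)=-(-1)^{p(a)p(f)}f_{\mub-\la}(a_\la m)$ of the dual conformal structure, and the Proposition \ref{modcorr} expansion $a_\la v=\sum_K(-1)^{p_K}\la_{\bar K}/f(K)\cdot (\yb_K a)\cdot v$ that converts the $\la$-bracket into the $\mathcal{A}^e(R)$-action. In the simplest setting where $a_i\in R\subset\mathcal{A}(R)$, one sets $\la=0$ in the dual formula to obtain $(a_i\cdot f)_\mub(m)=-(-1)^{p(a_i)p(f)}f_\mub(a_i\cdot m)$; substituting first the coherence relation $a_i\cdot m=\partial_{y_i}\cdot m=-\partial_i m$ and then the defining property $f_\mub(\partial_i m)=(-1)^{p_ip(f)}\mu_i f_\mub(m)$ produces $(-1)^{(p(a_i)+p_i)p(f)}\mu_i f_\mub(m)$, and the parity matching $p(a_i)=p(\partial_{y_i})=p_i$ (forced by $z_i$ having definite parity) collapses the sign to $+1$.

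For the general case $a_i=\sum_K\yb_K b_K$ with $b_K\in R$, the action of each summand on $f\in M^\vee$ is, by Proposition \ref{modcorr} applied to $M^\vee$, extracted as the coefficient of $\la_{\bar K}$ in $(b_K)_\la f$; this coefficient is computed via a Taylor expansion of $f_{\mub-\la}$ around $\mub$ combined with the $\la$-expansion of $(b_K)_\la m$. Summing over $K$, the coherence relation $\sum_K(\yb_K b_K)\cdot m=-\partial_i m$ accounts for the principal contribution $\mu_i f_\mub(m)$, while the remaining Taylor cross-terms are to be cancelled using the auxiliary identities $[a_j,a_i]=0$ and $\partial_{y_j}(a_i)=0$ in $\mathcal{A}(R)$ that follow from the centrality of $z_i$ in $\mathcal{A}^e(R)$.

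The main obstacle is precisely this last cancellation: the coherence of $M$ furnishes only a single relation, whereas the Taylor expansion of $f_{\mub-\la}$ produces contributions at every order in $\la$, and collapsing them requires the full strength of centrality rather than coherence alone. The combinatorial bookkeeping over the multi-indices $K,L$, the super-parities $p_K,p(b_K),p(f)$, and the factors $f(K)$ is the technical heart of the argument.
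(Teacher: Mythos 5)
This proposition is imported verbatim from \cite{CCK} and the present paper gives no proof of it, so there is no in-text argument to compare yours with; I am judging the proposal on its own terms. Your reduction is correct: by the dictionary of Proposition \ref{modcorr}, coherence of $M^\vee$ is equivalent to $(a_i\cdot f)_{\mub}(m)=\mu_i f_{\mub}(m)$, and your treatment of the special case $a_i\in R$ (extract the constant term in $\la$ of the dual action, use $a_i\cdot m=-\de_i m$ and the defining property of $M^\vee$, and observe that the two signs cancel because $p(a_i)=p_i$) is sound. Your outline of the general case $a_i=\sum_K \yb_K b_K$ also names the right ingredients: the $L=\emptyset$ Taylor term is exactly where coherence of $M$ enters, and the higher terms are controlled by the iterated derivatives $\de_{y_L}(a_i)$, which do vanish in $\mathcal A(R)$ since $[\de_{y_j},a_i]=[a_j,a_i]=[\de_{y_j}-z_j,\,\de_{y_i}-z_i]=0$ by centrality (and then all higher $\de_{y_L}(a_i)$ vanish as well).

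The gap is that this decisive step is announced rather than carried out. You write that the cross-terms ``are to be cancelled'' and you yourself label the cancellation ``the main obstacle.'' What is missing is the verification that, after extracting the coefficient of $\la_{\bar K}$ from $-(-1)^{p(b_K)p(f)}f_{\mub-\la}\bigl((b_K)_{\la}m\bigr)$ and summing over $K$, the total coefficient of each operator $(\de_{\mub})_L f_{\mub}(\cdot)$ with $L\neq\emptyset$ is, up to a nonzero constant, precisely the action on $m$ of the class $\de_{y_L}(a_i)$ --- i.e., that the multinomial factors $f(K)$, the reversed sequences $\bar K$, and the super-signs from commuting odd $\lambda$'s and $y$'s past $b_K$ and $f$ all match up. Until that matching is established, the identity $\de_{y_L}(a_i)=0$ kills nothing. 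Since you have correctly isolated what must be checked and the auxiliary identities you invoke are true consequences of regularity, the gap is fillable by a careful computation, but as written the proof stops exactly at its technical heart. Two minor points: you need Proposition \ref{Dual1} (that $M^\vee$ is again a conformal module) before Proposition \ref{modcorr} may be applied to $M^\vee$, and you should say explicitly that the vanishing you need is of \emph{all} iterated derivatives $\de_{y_L}(a_i)$, $\ell(L)\ge 1$, not just the first-order ones (though the former follows from the latter).
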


\begin{theorem}
Let $L$ be a linearly compact Lie superalgebra and
let $R$ be a regular Lie conformal superalgebra of type $(r,s)$, such that $L$ is isomorphic to $\mathcal A(R)$. Then the duality functor exists on the category of continuous $L$-modules.
\end{theorem}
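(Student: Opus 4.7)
The plan is to assemble the pieces established earlier in the section into a chain of equivalences and functors. The first step is to use the hypothesis $L\cong\mathcal{A}(R)$ together with the regularity of $R$ to identify the category of continuous $L$-modules with the category of coherent conformal $R$-modules. By Proposition \ref{modcorr}, a conformal $R$-module is the same as a continuous module over $\mathcal{A}^e(R)=D\ltimes\mathcal{A}(R)$; and by the discussion following Definition \ref{regular}, regularity together with the coherence condition cuts this down to precisely continuous modules over $\mathcal{A}(R)\cong L$. This identification is an equivalence of categories: morphisms on either side correspond in the obvious way via the dictionary $a_\lambda v=\sum_K(-1)^{p_K}\frac{\lambda_{\bar K}}{f(K)}(\yb_K a).v$ and $\de_i v=-\de_{y_i}.v$.

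The second step is to apply the contravariant functor $M\mapsto M^\vee$, $T\mapsto T^\vee$ established in the theorem preceding Proposition \ref{modcorr}, which already provides a duality functor on the category of conformal $R$-modules. Proposition \ref{coherent} is the key ingredient that guarantees this functor restricts to the subcategory of coherent conformal $R$-modules: if $M$ is coherent then so is $M^\vee$, and the functoriality (already verified) is preserved by restriction. Composing with the equivalence in the first step then transports the duality functor onto the category of continuous $L$-modules.

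More concretely, given a continuous $L$-module $M$, we regard it as a coherent conformal $R$-module via the equivalence above, form the conformal dual $M^\vee$, and then reinterpret $M^\vee$ as a continuous $L$-module using the same equivalence in the reverse direction; for a morphism $T\colon M\to N$ of continuous $L$-modules we take $T^\vee\colon N^\vee\to M^\vee$ as defined in Proposition \ref{Dual2}. The functor axioms $(id_M)^\vee=id_{M^\vee}$ and $(ST)^\vee=(-1)^{p(S)p(T)}T^\vee S^\vee$ hold automatically, having already been verified for arbitrary conformal $R$-modules.

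There is no real obstacle here beyond bookkeeping: all the substantive work is contained in Proposition \ref{modcorr} (matching continuous $\mathcal{A}^e(R)$-modules with conformal $R$-modules), in the coherence refinement under regularity, and in Proposition \ref{coherent} (stability of coherence under the conformal dual). The only mild subtlety worth mentioning is the compatibility of the duality with the action of $D$ on the coherent side, i.e.\ checking that if the elements $x-a_x$ act trivially on $M$ they also act trivially on $M^\vee$; but this is exactly the content of Proposition \ref{coherent}, so it is already taken care of.
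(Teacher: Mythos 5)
Your proof is correct and follows essentially the same route as the paper: identify continuous $L$-modules with coherent conformal $R$-modules via Proposition \ref{modcorr} and the regularity of $R$ (equivalently, via the presentation of $\mathcal A(R)$ as the quotient of $\mathcal A^e(R)$ by the ideal generated by the elements $x-a_x$), and then invoke Proposition \ref{coherent} to see that the conformal duality functor preserves coherence and hence descends to the category of continuous $L$-modules.
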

\begin{proof} Note that $\mathcal A(R)$ is isomorphic to the factor algebra of ${\mathcal A}^e(R)$ by the ideal generated by all elements $x-a_x$, $x\in D$. Hence the theorem follows from Propositions \ref{modcorr} and \ref{coherent}.
\end{proof}

\begin{remark} Let $L$ be a linearly compact Lie superalgebra, for which $\Der L\subseteq \ad L+\C E$ where $E$ is a diagonalizable operator.
Most of simple $L$, including the five exceptional ones and $W(r,s)$, have this property \cite{CK}. Let $R$ be a Lie conformal superalgebra of type $(r,s)$ such that ${\mathcal A}(R)\cong L$. 
Then $R$ is regular.
\end{remark}

\begin{example}
Let $\g$ be a Lie superalgebra and let $\Cur\,\g=\C[\de]\otimes \g$
be the current Lie conformal superalgebra of type $(1,0)$, for which
$[a_{\lambda}b]=[a,b]$ for $a,b\in 1\otimes\g$.
Then 
\[\mathcal{A}^e(\Cur\, \g)=\C\de_y\ltimes (\g\otimes \C[[y]]).\]
Hence $\de_y$ is not an inner derivation of $\mathcal{A}(\Cur\, \g)$,
and $\Cur\, \g$ is not regular.
\end{example}

In what follows we assume the following technical conditions on a Lie conformal superalgebra $R$ of type $(r,s)$, which turn out to be satisfied in many interesting cases (see \cite{CCK}).

\begin{definition}\label{assumption} Let $R$ be a Lie conformal superalgebra. We say that a linearly compact Lie superalgebra $L$ has a $D$-conformal structure over $R$ if the following conditions are satisfied:
	\begin{enumerate}
	\item $L\cong \mathcal A(R)$;
		\item $R$ is $\Z$-graded;
		\item the induced $\Z$-gradation on $\mathcal A(R)$ has depth at most 3;
		\item the homogeneous components  $\mathcal A(R)_{-1}$ and  $\mathcal A(R)_{-3}$ are purely odd (in particular they can vanish);
		\item the map $\ad:\mathcal A(R)_{-2}\rightarrow \Der (\mathcal A(R))$ is injective and its image is  $D$. 
	\end{enumerate}
\end{definition}

Note that such $R$ is regular.
By condition (5) of Definition \ref{assumption}, if $x\in \mathcal A(R)_{-2}$ then
$\ad(x)$ can be identified with an element in $D$ and hence in  ${\mathcal A}^e(R)$.


\begin{definition}\label{shifteddual} If $\mathfrak{g}$ is a  Lie superalgebra, $\varphi: \mathfrak g\rightarrow \mathfrak{gl}(V)$ is a representation of $\mathfrak{g}$ and  $x\mapsto \chi_x\in \C$ is a character of $\mathfrak{g}$, we let $\varphi^\chi:\mathfrak{g}\rightarrow \mathfrak{gl}(V)$ be given by
	\[\varphi^\chi(x)(v)=\varphi(x)(v)+\chi_x v.
	\]
	\end{definition}
	It is clear that $\varphi^\chi$ is still a representation and we call it the $\chi$-shift of $\varphi$.
	
\begin{definition} \label{rho}
Let $\g$ be a $\Z$-graded Lie superalgebra.
	For $x\in \mathfrak g_0$ let \[\chi_x=\str (\ad(x)_{|\mathfrak g_{<0}}),\]
 where $\str$ denotes supertrace.
\end{definition}

If $V$ is any $\mathfrak g$-module we call the $\chi$-shifted dual of $V$ the $\chi$-shift of the contragredient representation $V^*$ and
we denote by $F^{\vee}$ the $\chi$-shifted dual of $F$, where $\chi$ is the character introduced in Definition \ref{rho}.

Let $L=\prod_{j\in \Z} \g_j$ be a $\Z$-graded linearly compact Lie superalgebra with a $D$-conformal structure over $R$. 
Let $F$ be a finite-dimensional ${\mathfrak g}_{0}$-module  which we extend to
${L}_{\geq 0}=\prod_{j\geq 0}{\mathfrak g}_{j}$ by letting
${\mathfrak g}_{j}$, $j>0$, act trivially. We let
$$M(F)=\textrm{Ind}_{{ L}_{\geq 0}}^{{L}}F$$
be the parabolic finite Verma module, attached to $F$. 
It is natural to wonder whether the dual of a parabolic Verma module is still a parabolic Verma module. 
In \cite{CCK} we gave an answer to this question in the following theorem.
\begin{theorem}\label{main} Let $L$ be a Lie superalgebra with a $D$-conformal structure over $R$ and let $F$ be a finite-dimensional $\mathfrak{g}_0$-module. Then $M(F)^\vee$ is isomorphic to $M(F^\vee)$ as an $L$-module.
\end{theorem}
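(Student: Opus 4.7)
My plan is to work on the conformal side, where the duality functor has an explicit description, and to identify a distinguished submodule of $M(F)^\vee$ that is isomorphic to $F^\vee$ as an $L_{\ge 0}$-module; Frobenius reciprocity will then produce the desired isomorphism.

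First I would use the equivalence from Propositions \ref{modcorr} and \ref{coherent} to regard $M(F)$ and $M(F)^\vee$ as coherent conformal $R$-modules. Using the PBW decomposition $M(F)\cong U(L_{<0})\otimes F$ together with the structural information in Definition \ref{assumption} --- namely $L_{<0}=\mathfrak g_{-1}\oplus\mathfrak g_{-2}\oplus\mathfrak g_{-3}$ with $\mathfrak g_{-1}$ and $\mathfrak g_{-3}$ purely odd and $\mathfrak g_{-2}\cong D$ abelian --- I would check that $M(F)$ lies in $\mathcal P$ with finite-dimensional graded pieces for the $\Z$-grading induced from $L$. Dualizing piece-by-piece then produces a $\Z$-grading on $M(F)^\vee$ whose extremal component is canonically identified with $F^{*}$ as a vector space.

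Next, I would examine the $L_{\ge 0}$-action on this distinguished copy of $F^*$. Unwinding the formula
\[(a_\la f)_\mub(m)=-(-1)^{p(a)p(f)}f_{\mub-\la}(a_\la m)\]
together with the correspondence of Proposition \ref{modcorr}, one sees that any $x\in\mathfrak g_{>0}$ acts as zero on this copy, because contributing to the pairing would require $x$ to shift an element of $F$ into negative degree, which is impossible. For $x\in\mathfrak g_0$, the action is the usual dual action $-x^{*}$ plus a scalar correction obtained by commuting $x$ through the PBW monomials in $U(L_{<0})$ before restricting to $F$. A basis-independent reshuffling of this correction, using the triangular form of $\ad(x)$ on the filtration $\mathfrak g_{-1}\subset\mathfrak g_{-1}\oplus\mathfrak g_{-2}\subset L_{<0}$, identifies it with $\chi_x=\str(\ad(x)_{|L_{<0}})$, which is exactly the shift in Definition \ref{rho}. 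Hence the distinguished copy of $F^*$ sits inside $M(F)^\vee$ as $F^\vee$ with trivial $\mathfrak g_{>0}$-action.

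Frobenius reciprocity applied to the inclusion $F^\vee\hookrightarrow M(F)^\vee$ now yields an $L$-equivariant map $\Phi\colon M(F^\vee)\to M(F)^\vee$. To conclude, I would show $\Phi$ is an isomorphism by a graded-dimension argument: the source is free over $U(L_{<0})$ on $F^\vee$, the target has, in each degree, the dimension of the corresponding graded piece of $M(F)$ (since those pieces are finite-dimensional and dualization preserves their dimensions), so graded dimensions match. Surjectivity onto the top component combined with $L$-equivariance and freeness of the source then forces $\Phi$ to be bijective.

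I expect the main obstacle to be the second paragraph: the shift computation requires careful super-sign bookkeeping, and verifying that all higher-order commutator corrections collapse to just the supertrace relies crucially on the purely odd nature of $\mathfrak g_{-1},\mathfrak g_{-3}$ and on the fact that $\mathfrak g_{-2}\cong D$ acts via inner derivations in a very controlled way, as built into the $D$-conformal structure.
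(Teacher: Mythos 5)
Note first that the paper you are working from does not actually prove Theorem \ref{main}: it is imported verbatim from \cite{CCK}, so there is no in-paper argument to compare against. Your overall strategy --- pass to the conformal side, locate inside $M(F)^\vee$ a $\mathfrak{g}_{\ge 0}$-submodule isomorphic to $F^\vee$, apply Frobenius reciprocity, and finish by a rank/graded-dimension count --- is indeed the shape of the argument in \cite{CCK}, so the skeleton is sound.

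However, the two steps that carry all the content of the theorem are not actually carried out, and one of them is misstated. First, the ``distinguished copy of $F^*$'' cannot be the component of the graded dual pairing against $F\subset M(F)$: a functional supported on $F$ is sent by $x\in\mathfrak{g}_{>0}$ to a functional supported on lower-degree PBW monomials, which is generally nonzero, so your degree argument for the vanishing of the $\mathfrak{g}_{>0}$-action does not apply to that copy. The correct singular vectors are the functionals concentrated (modulo $\deb M(F)$) on the top of $\Lambda(\mathfrak{g}_{-1}\oplus\mathfrak{g}_{-3})\otimes F$, and the identification of that space with $F^*$ is not canonical --- it is twisted by the top exterior power of the odd negative part, which is precisely where part of the character $\chi$ enters. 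Second, and more seriously, the derivation of the shift $\chi_x=\str(\ad(x)|_{\mathfrak{g}_{<0}})$ is only asserted. The heuristic of ``commuting $x$ through PBW monomials'' does not by itself produce the $\mathfrak{g}_{-2}$-contribution to the supertrace: in the conformal-dual formula $(a_\la f)_{\mub}(m)=-(-1)^{p(a)p(f)}f_{\mub-\la}(a_\la m)$ that contribution arises from the substitution $\mub\mapsto\mub-\la$ (i.e.\ from the $\C[\deb]$-module structure of the restricted dual), while the $\mathfrak{g}_{-1}\oplus\mathfrak{g}_{-3}$-contribution arises from the top exterior power twist just mentioned; these must be computed and combined with consistent super-signs to land exactly on Definition \ref{rho}. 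Since the theorem is essentially the statement that the shift equals this particular supertrace, leaving this computation as an acknowledged ``obstacle'' means the proposal is a plausible plan rather than a proof.
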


Theorem \ref{main} leads to the following natural problem for a $\Z$-graded linearly compact Lie superalgebra $L$: does $L$ have a $D$-conformal structure over a Lie conformal superalgebra $R$ of type $(r,s)$?
In \cite{CCK} we provided a positive answer to this question for the Lie superalgebras 
$W(r,s)$,  $E(3,6)$, $E(5,10)$ and $E(3,8)$ with the principal $\Z$-grading, while the case of $E(1,6)$ was essentially established in \cite{BKL1}. The only missing exceptional Lie superalgebra was therefore $E(4,4)$ and the main result of this paper is to provide a positive answer also in this case. 

\section{The Lie conformal superalgebras $RW(4,4)$ and $RE(4,4)$}
The prototypical example of a $\Z$-graded Lie conformal superalgebra of type $(r,s)$ is $RW(r,s)$ which is defined as follows.
 
 \begin{definition} 
  We denote by $RW(r,s)$ the free $\inlinewedge[\deb]$-module with even generators $a_1,\ldots,a_{r}$ and odd generators $a_{r+1},\ldots,a_{r+s}$, all of degree $-2$, and the $\la$-bracket given by\[
	[{a_i}_\la a_j]=(\de_i+\lambda_i)a_j+a_i\lambda_j,\hspace{0.5cm} i,j=1,\ldots,r+s\]
	and extended on the whole $RW(r,s)$ by the sesquilinearity \eqref{defsupconf1} of Definition \ref{defsupconf}.
\end{definition}

In \cite{CCK} we proved the following results:
\begin{proposition}\label{Wiso}\cite[Propositions 4.2, 4.3]{CCK}
The Lie superalgebra $W(r,s)$  consisting of all continuous derivations of $\C[[\xb]]$
 has  a $D$-conformal structure over $RW(r,s)$.
The map
	\[
	 \varphi: W(r,s)\rightarrow \mathcal A(RW(r,s)),
	\]
	given by $  \xb_K \de_{x_i}\mapsto -\yb_K a_i$
	is a $\Z$-graded Lie superalgebra isomorphism, where the $\Z$-gradation on the Lie superalgebra $W(r,s)$ of all continuous derivations of $\C[[\xb]]$ is given by $\deg x_i=-\deg \de_{x_i}=2$.
\end{proposition}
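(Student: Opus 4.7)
The plan is to first describe $\mathcal A(RW(r,s))$ concretely as a vector space, then verify that $\varphi$ is an isomorphism of $\Z$-graded Lie superalgebras, and finally check the five axioms of a $D$-conformal structure one by one.

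Since $RW(r,s)$ is a free $\inlinewedge[\deb]$-module on $a_1,\ldots,a_{r+s}$, the affinization $\tilde R$ is freely generated over $\inlinewedge[\deb]\otimes\inlinewedge[[\yb]]$ by the $a_i$. In the quotient $\mathcal A(RW(r,s))=\tilde R/(\deb+\deb_\yb)\tilde R$ each $\de_j$ can be replaced by $-\de_{y_j}$, and since $\de_{y_j}(a_i\otimes 1)=0$, pushing all $\de_{y_j}$'s through the $\yb$-monomials via the bimodule axioms produces a spanning set $\{\yb_K a_i\}$; linear independence then follows by comparing graded dimensions with $W(r,s)$, which has matching basis $\{\xb_K\de_{x_i}\}$ in each degree. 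Thus $\varphi(\xb_K\de_{x_i})=-\yb_K a_i$ is a $\Z$-graded vector space isomorphism, since $\deg(\yb_K a_i)=2\ell(K)-2=\deg(\xb_K\de_{x_i})$.

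To verify that $\varphi$ is a Lie superalgebra homomorphism, I would compute $[\yb_K a_i,\yb_L a_j]$ via the affinized $\la$-bracket. Starting from $[{a_i}_\la a_j]=(\de_i+\lambda_i)a_j+a_i\lambda_j$, the substitution $\la\mapsto\la+\deb_\yb$, setting $\la=0$, and using $\de_i a_j\equiv 0$ in $\mathcal A(R)$ produces a finite sum of terms in which $\de_{y_i}$ and $\de_{y_j}$ differentiate $\yb_K$ and $\yb_L$ respectively. This matches, term by term, the Leibniz expansion
\[
[\xb_K\de_{x_i},\xb_L\de_{x_j}]=\xb_K\de_{x_i}(\xb_L)\de_{x_j}-(-1)^{(p_K+p_i)(p_L+p_j)}\xb_L\de_{x_j}(\xb_K)\de_{x_i}
\]
of the vector field bracket in $W(r,s)$, once one accounts for the overall minus sign present on both sides of $\varphi$.

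Finally, I would verify the five axioms of Definition \ref{assumption}. Axiom (1) is the isomorphism just established. For (2), the $\Z$-gradation on $RW(r,s)$ is well defined with $a_i,\lambda_i,\de_i$ all in degree $-2$. Axiom (3) follows from $\deg(\yb_K a_i)=2\ell(K)-2\geq -2$, giving depth $2$. Axiom (4) is vacuous since all degrees are even, so $\mathcal A(R)_{-1}=\mathcal A(R)_{-3}=0$. For (5), the bracket computation specialized to $K=\emptyset$ gives $[a_i,\yb_L a_j]=(\de_{y_i}\yb_L)a_j$, so $\ad(a_i)$ coincides with $\de_{y_i}\in D$, yielding a bijection $\mathcal A(R)_{-2}\cong D$. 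The main obstacle will be the careful sign bookkeeping in the bracket computation, namely the interplay between the parity of $a_i$, the parities of the $y_j$'s in $\yb_K$ and $\yb_L$, and the conventions of the $\inlinewedge[[\yb]]$- and $\inlinewedge[\deb_\yb]$-bimodule structure, ensuring that the signs coming from $\varphi$ combine consistently with those in the Leibniz expansion.
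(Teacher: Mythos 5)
Your proposal is correct and follows the same direct-verification route as the cited source \cite[Propositions 4.2, 4.3]{CCK} -- the present paper states this result without reproving it. One minor point worth tightening: ``linear independence follows by comparing graded dimensions with $W(r,s)$'' presupposes that you already know the dimension of each graded piece of $\tilde R/\tilde\deb\tilde R$; the clean way to get independence of the $\yb_K a_i$ is to use that $\tilde R$ is free over $\inlinewedge[\deb]\otimes\inlinewedge[[\yb]]$ on the $a_i$, so the quotient by the relations $\de_j\equiv-\de_{y_j}$ is manifestly a free $\inlinewedge[[\yb]]$-module on the $a_i$ (equivalently, define a map $\tilde R\to W(r,s)$ sending $\yb_K a_i\mapsto-\xb_K\de_{x_i}$, check it annihilates $\tilde\deb\tilde R$, and observe it is inverse to $\varphi$ on the spanning set).
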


	Now we introduce a notable Lie conformal subalgebra $RE(4,4)$ of $RW(4,4)$. To this aim we introduce the following notation: if $u=(u_1,\dots,u_8)$ and 
 $v=(v_1,\dots,v_8)$ 
 we set
 \begin{equation}\label{eleven}B_{uv}=\sum_{i=1}^4u_iv_{i+4},\,\, C_{uv}=\sum_{i=1}^4(u_iv_{i+4}+u_{i+4}v_i).\end{equation}
 For example:
\[B_{\de\de}= \sum_{i=1}^4 \de_{i}\de_{i+4}\in \C[\deb],\hspace{5mm}
	C_{\de a}=\sum_{i=1}^4 (\de_ia_{i+4}+\de_{i+4}a_i)\in RW(r,s).\hspace{5mm}\]

 This is the main definition:
	\begin{definition} For $i=1,\ldots,8$ we let
		
		\begin{equation}\label{twelve}
		\alpha_i=\de_i B_{\de\de} C_{\de a} -2 \nu_i C_{\de a} + \de_i B_{\nu a}\in RW(4,4),\end{equation}

\noindent
where
\[\nu_1= \de_{6}\de_7\de_8,\,\,\,\,
\nu_2=- \de_{5}\de_7\de_8,\,\,\,\,
\nu_3= \de_{5}\de_6\de_8,\,\,\,\,
\nu_4=-\de_{5}\de_6\de_7,\,\,\,\,
\nu_5=\nu_6=\nu_7=\nu_8=0.\]
Denote by $RE(4,4)$ the $\C[\de]$-submodule of $RW(4,4)$ generated by the elements $\alpha_1,\ldots,\alpha_8$.
	
\end{definition}
Note that all elements $\alpha_1,\ldots,\alpha_8$ have degree $-10$, $\alpha_1,\ldots,\alpha_4$ are even and $\alpha_5,\ldots,\alpha_8$ are odd. As an example, for the reader's convenience, we explicitly write  the elements $\alpha_1$ and $\alpha_5$:
\begin{align*}
	\alpha_1
	&= 2\de_{5}\de_{6}\de_{7}\de_{8}a_{1}-\de_{1}\de_{6}\de_{7}\de_{8}a_{5}-\de_{1}\de_{5}\de_{7}\de_{8}a_{6}+\de_{1}\de_{5}\de_{6} \de_{8}a_{7}-\de_{1}\de_{5}\de_{6}\de_{7} a_{8} - 2\de_{2}\de_{6}\de_{7}\de_{8} a_{6} \\
 &-2\de_{3}\de_{6}\de_{7}\de_{8}a_{7} -2\de_{4}\de_{6}\de_{7}\de_{8} a_{8} + \de_{1}^2\de_{5}\de_{8}a_{4} + \de_{1}^2\de_{5}\de_{6}a_{2} + \de_{1}^2\de_{5}\de_{7} a_{3} +\de_{1}\de_{2} \de_{6}\de_{7}	a_{3} \\
 &-\de_{1}\de_{2}\de_{5}\de_{6}a_{1} + \de_{1}\de_{2}\de_{6}\de_{8} a_{4}
	+\de_{1}\de_{3} \de_{7}\de_{8} a_{4} - \de_{1}\de_{3}\de_{5}\de_{7}a_{1} -\de_{1}\de_{3}\de_{6}\de_{7} a_{2} - \de_{1}\de_{4} \de_{7}\de_{8}a_{3}\\
 &-\de_{1}\de_{4}\de_{6}\de_{8}a_{2} - \de_{1}\de_{4}\de_{5}\de_{8} a_{1} + \de_{1}^3\de_{5} a_{5} + \de_{1}^2 \de_{2} \de_{5} a_{6} + \de_{1}^2\de_{3} \de_{5} a_{7} + \de_{1}^2\de_{4} \de_{5} a_{8} \\
 &+\de_{1}^2\de_{2} \de_{6} a_{5} + \de_{1}\de_{2}^2 \de_{6} a_{6} +\de_{1}\de_{2}\de_{3} \de_{6} a_{7} + \de_{1}\de_{2}\de_{4} \de_{6} a_{8} + \de_{1}^2\de_{3} \de_{7} a_{5} + \de_{1}\de_{2}\de_{3} \de_{7} a_{6}\\
 &+\de_{1}\de_{3}^2 \de_{7} a_{7}+\de_{1}\de_{3}\de_{4} \de_{7} a_{8} + \de_{1}^2\de_{4} \de_{8} a_{5}+\de_{1}\de_{2}\de_{4} \de_{8} a_{6}+\de_{1}\de_{3}\de_{4} \de_{8} a_{7}+\de_{1}\de_{4}^2 \de_{8} a_{8};
\end{align*}
\begin{align*}
\alpha_5=&\de_5\de_6\de_7\de_8 a_5+ \de_2\de_5\de_6\de_7 a_3+\de_1 \de_2\de_5\de_6 a_5+\de_2 \de_5 \de_6 \de_8 a_4+\de_3 \de_5 \de_7 \de_8 a_4+\de_1 \de_3 \de_5 \de_7 a_5\\
 &-\de_3 \de_5 \de_6 \de_7 a_2-\de_4 \de_5 \de_7 \de_8 a_3-\de_4 \de_5 \de_6 \de_8 a_2+\de_1 \de_4 \de_5 \de_8 a_5+\de_2^2 \de_5 \de_6  a_6+\de_2 \de_3 \de_5 \de_6 a_7\\
 &+\de_2 \de_4 \de_5 \de_6 a_8+\de_ 2\de_3 \de_5 \de_7 a_6+\de_3^2 \de_5 \de_7  a_7+\de_3 \de_4 \de_5 \de_7 a_8+\de_2 \de_4 \de_5 \de_8 a_6+\de_3 \de_4 \de_5 \de_8 a_7\\
 &+\de_4^2 \de_5 \de_8 \de a_8.
\end{align*}
\begin{proposition}\label{relations}
For all $i,j=5,6,7,8$, with $i\neq j$, we have the following relations in $RE(4,4)$:
\begin{enumerate}
	\item $\de_{i}\alpha_{i}=0$;
	\item $\de_{i} \alpha_{j}=-\de_{j} \alpha_{i}$;
	\item $\de_{i}\alpha_{j-4}=\de_{j-4} \alpha_{i}$;
	\item $\de_{i} \alpha_{i-4}=\de_{i-4} \alpha_{i}-2 \sum_{k=1}^4 \de_k \alpha_{k+4}$;
 \item $2\de_i \alpha_j=\de_{p}\alpha_{q}-\de_{q}\alpha_{p}$, where $p,q\in \{1,2,3,4\}$ are such  that $\epsilon_{i-4\,j-4\,p\,q}=1$.
\end{enumerate}
\end{proposition}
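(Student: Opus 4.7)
The plan is to verify each of the five relations by direct computation in $RW(4,4)$. The main simplification is that $\nu_i=0$ for $i\in\{5,6,7,8\}$, so
\[\alpha_i = \de_i B_{\de\de}\,C_{\de a}+\de_i B_{\nu a}\qquad(i=5,6,7,8).\]
Throughout I use that $\de_i^2=0$ and $\de_i\de_j=-\de_j\de_i$ for odd indices $i,j\in\{5,6,7,8\}$, while $\de_1,\dots,\de_4$ are even and commute with everything. The key combinatorial fact is that $\nu_k$, for $k\le 4$, is the product of the three odd $\de$'s missing $\de_{k+4}$, normalised so that $\de_{k+4}\nu_k=\de_5\de_6\de_7\de_8$ (checked by inspection).

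Relations (1) and (2) are immediate from $\de_i^2=0$ and anticommutativity. For (3), the only discrepancy between $\de_i\alpha_{j-4}$ and $\de_{j-4}\alpha_i$ is an extra term $-2\de_i\nu_{j-4}C_{\de a}$; but since $\nu_{j-4}$ only misses $\de_j$ and $i\neq j$, the factor $\de_i$ already appears in $\nu_{j-4}$, so this term vanishes.

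For (4), the analogous extra term $-2\de_i\nu_{i-4}C_{\de a}$ does not vanish, and using $\de_i\nu_{i-4}=\de_5\de_6\de_7\de_8$ gives
\[\de_i\alpha_{i-4}-\de_{i-4}\alpha_i=-2\de_5\de_6\de_7\de_8\,C_{\de a}.\]
To match this with $-2\sum_{k=1}^4\de_k\alpha_{k+4}$, I exploit that each summand $\de_k\de_{k+4}$ of $B_{\de\de}$ is odd, hence $B_{\de\de}^2=0$ by supercommutativity. Therefore $\sum_k\de_k\alpha_{k+4}=B_{\de\de}B_{\nu a}$, and a short expansion yields $B_{\de\de}B_{\nu a}=\de_5\de_6\de_7\de_8\,C_{\de a}$ (the half of $C_{\de a}$ involving the odd generators $\de_{i+4}$ is annihilated by $\de_5\de_6\de_7\de_8$).

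For (5), the contributions involving $B_{\nu a}$ drop out because for distinct $i,j\in\{5,6,7,8\}$ every $\nu_k$ contains at least one of $\de_i,\de_j$. The remaining identity reduces to
\[\de_i\de_j\,B_{\de\de}=\de_q\nu_p-\de_p\nu_q,\]
which follows from the Hodge-type computations $\de_i\de_j\de_{p+4}=-\nu_q$ and $\de_i\de_j\de_{q+4}=\nu_p$; the signs are matched precisely by the Levi-Civita condition $\epsilon_{i-4\,j-4\,p\,q}=1$. I expect the sign bookkeeping in (5) to be the main obstacle, as it requires interpreting $\nu_k$ as a Hodge dual in the exterior algebra on $\de_5,\de_6,\de_7,\de_8$; once that correspondence is pinned down, the remaining checks are routine.
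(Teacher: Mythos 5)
Your proof is correct and follows essentially the same route as the paper's: the same reduction $\alpha_i=\de_i(B_{\de\de}C_{\de a}+B_{\nu a})$ for $i\ge 5$, the vanishing $\de_i\nu_{j-4}=0$ for (3), the identities $\de_i\nu_{i-4}=\de_5\de_6\de_7\de_8$, $B_{\de\de}^2=0$ and $B_{\de\de}B_{\nu a}=\de_5\de_6\de_7\de_8\,C_{\de a}$ for (4), and the Hodge-type relations $\nu_q=-\de_i\de_j\de_{p+4}$, $\nu_p=\de_i\de_j\de_{q+4}$ for (5). All sign conventions you pin down agree with the paper's computation, so nothing further is needed.
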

\begin{proof}
Recalling that for $i\geq 5$ we have $\alpha_i=\de_i(B_{\de \de}C_{\de a}+B_{\nu a})$, Equations (1) and (2)  trivially hold since $\de_i^2=0$ and $\de_i \de_j=-\de_j \de_i$.
Observing that $\de_i\nu_{j-4}=0$, we obtain (3):
\[\de_i \alpha_{j-4}=\de_i \de_{j-4}(B_{\de \de}C_{\de a}+B_{\nu a})-2\de_i \nu_{j-4} C_{\de a}=\de_{j-4} \alpha_i.
\]
Furthermore, we observe that
\[
\de_5 \nu_1=\de_6 \nu_2=\de_7 \nu_3=\de_8 \nu_4=\de_5 \de_6 \de_7 \de_8  
\]
and we compute
\[
\de_i \alpha_{i-4}=\de_i\de_{i-4}(B_{\de\de}C_{\de a}+B_{\nu a})-2\de_i \nu_{i-4}C_{\de a}=\de_i \alpha_{i-4}-2 \de_5\de_6 \de_7 \de_8 C_{\de a}.
\]
On the other hand
\[
\sum_{k=1}^4 \de_k \alpha_{k+4}=\sum_k \de_k \de_{k+4}(B_{\de \de}C_{\de a}+B_{\nu a})=B_{\de \de}(B_{\de \de}C_{\de a}+B_{\nu a})=B_{\de \de} B_{\nu a}=\de_5\de_6 \de_7 \de_8 C_{\de a},
\]
hence Equation (4) follows.
Finally we compute
\[
\de_{i}\alpha_j=\de_i\de_j(B_{\de\de}C_{\de a}+B_{\nu a})=\de_i \de_j B_{\de \de} C_{\de a}=\de_i \de_j (\de_p\de_{p+4}+\de_q\de_{q+4})C_{\de a}
\]
and, observing that $\nu_q=-\de_i\de_j\de_{p+4}$ and $\nu_p=\de_i\de_j\de_{q+4}$,
\begin{align*}
\de_{p}\alpha_{q}-\de_{q}\alpha_{p}&=\de_{p}\de_{q}B_{\de \de} C_{\de a} -2\de_{p}\nu_{q}C_{\de a} +\de_{p}\de_{q} B_{\nu a}\\
&\hspace{4mm}-\de_{q}\de_{p}B_{\de \de} C_{\de a} +2\de_{q}\nu_{p}C_{\de a} -\de_{q}\de_{p} B_{\nu a}\\
&=-2 (\de_{p}\nu_{q}-\de_{q}\nu_{p})C_{\de a}\\
&=2 \de_i \de_j (\de_p\de_{p+4}+\de_q\de_{q+4})C_{\de a},
\end{align*}
and Equation (5) follows.

\end{proof}
\begin{corollary}\label{free4}
We have that $RE(4,4)$ is a free $\C[\de_1,\ldots,\de_4]$-module with basis $\alpha_1,\ldots,\alpha_8$. In particular there are no relations in $RE(4,4)$ among $\alpha_1,\ldots,\alpha_8$ which are independent from those in Proposition \ref{relations}.
\end{corollary}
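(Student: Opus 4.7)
The plan is to deduce the corollary from two independent claims: \emph{spanning}, that the relations of Proposition \ref{relations} suffice to rewrite every element of $RE(4,4)$ as a $\C[\de_1,\ldots,\de_4]$-linear combination of $\alpha_1,\ldots,\alpha_8$; and \emph{independence}, that $\alpha_1,\ldots,\alpha_8$ are $\C[\de_1,\ldots,\de_4]$-linearly independent in $RW(4,4)$. Together these yield the asserted freeness. The second sentence of the corollary then follows formally: any relation $\sum_i Q_i(\deb)\alpha_i=0$ with $Q_i\in\C[\deb]$ can be reduced, modulo the relations of Proposition \ref{relations}, to one of the form $\sum_i P_i(\de_1,\ldots,\de_4)\alpha_i=0$, which by independence is trivial, so the original relation is a $\C[\deb]$-combination of those in Proposition \ref{relations}.

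For spanning, I exploit the decomposition $\C[\deb]=\C[\de_1,\ldots,\de_4]\otimes\bigwedge(\de_5,\ldots,\de_8)$, reducing the task to rewriting each $\de_I\alpha_j$ with $I\subseteq\{5,6,7,8\}$ and $j\in\{1,\ldots,8\}$. For $|I|=1$, say $I=\{i\}$: when $j\ge 5$, I use (1) in the case $j=i$ and (5) in the case $j\ne i$ (using $2\ne 0$ in $\C$); when $j\le 4$, I use (3) in the case $j\ne i-4$ and (4) in the case $j=i-4$. In each case the result is an explicit $\C[\de_1,\ldots,\de_4]$-combination of the $\alpha_k$. For $|I|\ge 2$, I proceed by induction: having written $\de_{I\setminus\{i\}}\alpha_j=\sum_\ell P_\ell\alpha_\ell$ with $P_\ell\in\C[\de_1,\ldots,\de_4]$, I observe that the odd derivative $\de_i$ commutes with each even polynomial $P_\ell$, whence $\de_I\alpha_j=\sum_\ell P_\ell\de_i\alpha_\ell$, which reduces via the base case.

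For independence, I filter $\C[\deb]$ by the total degree in the odd variables $\de_5,\ldots,\de_8$ and extract the top (degree four) component of each $\alpha_i$. In the formula $\alpha_i=\de_iB_{\de\de}C_{\de a}-2\nu_iC_{\de a}+\de_iB_{\nu a}$ the summand $\de_iB_{\de\de}C_{\de a}$ contributes at most two odd derivatives (one from each factor in $B_{\de\de}$ and at most one from $C_{\de a}$) and $\de_iB_{\nu a}$ at most three, so only $-2\nu_iC_{\de a}$ can attain four; a direct check gives top component $2\de_5\de_6\de_7\de_8\,a_i$ for $i\le 4$. For $i\ge 5$ we have $\nu_i=0$, and the top component comes from $\de_iB_{\nu a}$ and equals $\de_5\de_6\de_7\de_8\,a_i$. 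Hence from a hypothetical relation $\sum_iP_i\alpha_i=0$ with $P_i\in\C[\de_1,\ldots,\de_4]$ the top component yields $\sum_i c_iP_i\de_5\de_6\de_7\de_8\,a_i=0$ in $RW(4,4)$ with $c_i\in\{1,2\}$, and the freeness of $RW(4,4)$ over $\C[\deb]$ on $a_1,\ldots,a_8$ forces $P_i\de_5\de_6\de_7\de_8=0$, hence $P_i=0$. The only non-routine point is the organizational one of checking that the five cases of Proposition \ref{relations} actually exhaust every pair $(i,j)$ with $i\ge 5$, which is a direct enumeration.
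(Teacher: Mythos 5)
Your proof is correct and takes essentially the same approach as the paper: the relations of Proposition \ref{relations} are used to rewrite any element of $RE(4,4)$ as a $\C[\de_1,\ldots,\de_4]$-combination of $\alpha_1,\ldots,\alpha_8$, and independence is obtained by extracting the coefficient of $\de_5\de_6\de_7\de_8$ (namely $2\,\de_5\de_6\de_7\de_8\,a_i$ for $i\le 4$ and $\de_5\de_6\de_7\de_8\,a_i$ for $i\ge 5$) and invoking the freeness of $RW(4,4)$ on $a_1,\ldots,a_8$. Your reduction step is just spelled out in more detail than in the paper.
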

\begin{proof}
Relations (1), (3), (4), (5) in Proposition \ref{relations} (actually (2) is a consequence of (5)) allow us to write every element $\beta\in RE(4,4)$ in the
form
\[
\beta=\sum_{i=1}^8 f_i(\de_1,\de_2,\de_3,\de_4)\alpha_i
\]
where $f_i(\de_1,\de_2,\de_3,\de_4)\in \C[\de_1,\ldots,\de_4]$ for all $i=1,\ldots,8$.
We have to show that if $\beta=0$ then $f_i=0$ for all $i$. If we expand all $\alpha_i$'s in terms of the elements $a_i$'s and we keep all terms divisible by $\de_5\de_6\de_7\de_8$ we obtain
\[
\de_5\de_6\de_7\de_8(2f_1a_1+2f_2a_2+2f_3a_3+2f_4a_4+f_5a_5+f_6a_6+f_7a_7+f_8a_8)=0.
\]
Therefore $f_1=f_2=\cdots=f_8=0$ since $a_1,\ldots,a_8$ are free generators of $RW(4,4)$.
\end{proof}
Before stating the main result we need some further notation. We let
\[\mu_1=\lambda_{6}\lambda_7\lambda_8,\,\,\,\mu_2=-\lambda_{5}\lambda_7\lambda_8,\,\,\, \mu_3=\lambda_{5}\lambda_6\lambda_8,\,\,\, \mu_4=-\lambda_{5}\lambda_6\lambda_7,\,\,\mu_5=\mu_6=\mu_7=\mu_8=0\]
and, consistently with \eqref{eleven}, we will consider the following elements:
\[	B_{\lambda \lambda}= \sum_{i=1}^4 \lambda_i \lambda_{i+4}, \hspace{1mm}
	C_{\lambda\de}=\sum_{i=1}^8 \lambda_i \de_{i+4},\hspace{1mm}
	B_{\mu\de}=  \sum_{i=1}^4 \mu_i \de_{i+4},\hspace{1mm}
	C_{\lambda\alpha}= \sum_{i=1}^8 \lambda_i \alpha_{i+4},\hspace{1mm}
	B_{\mu\alpha}= \sum_{i=1}^4 \mu_i \alpha_{i+1}.
\]
	\begin{theorem}
		For all $i,j=1,\ldots,8$ we have
		\begin{align*}\label{labre44}{[\alpha_i}_{\lab} \alpha_j]& = (\lambda_{i}\lambda_j B_{\lambda\lambda}  -2 \lambda_i \mu_j  -2 \lambda_j \mu_i) C_{\lambda\alpha}+3 \lambda_{i}\lambda_j B_{\mu\alpha}
			 + (\lambda_i C_{\mu\de} + \lambda_i B_{\lambda\lambda} C_{\lambda\de} 
    -2 \mu_i C_{\lambda\de}) \alpha_j.
		\end{align*}
		In particular  the $\inlinewedge[\deb]$-submodule $RE(4,4)$  of $RW(4,4)$ generated by $\alpha_1,\ldots,\alpha_8$ is a Lie conformal subalgebra of $RW(4,4)$.
	\end{theorem}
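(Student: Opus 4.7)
My plan is to compute $[{\alpha_i}_{\lab}\alpha_j]$ by direct expansion inside $RW(4,4)$, using the basic bracket $[{a_k}_{\lab}a_l]=(\de_k+\lambda_k)a_l+a_k\lambda_l$ and the conformal sesquilinearity~\eqref{defsupconf1} to pull polynomial-in-$\deb$ factors out of each slot. To keep the bookkeeping manageable I would write
\[\alpha_i=\de_i\Phi-2\nu_iC_{\de a},\qquad \Phi:=B_{\de\de}C_{\de a}+B_{\nu a},\]
and split via sesquilinearity on both slots, so that the target bracket becomes a sum of four pieces built from the ``building-block'' brackets $[\Phi_{\lab}\Phi]$, $[\Phi_{\lab}C_{\de a}]$, $[{C_{\de a}}_{\lab}\Phi]$ and $[{C_{\de a}}_{\lab}C_{\de a}]$, each multiplied by polynomial factors in $\lambda_i,\nu_i$ and in $\deb+\lab$ prescribed by sesquilinearity.

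Each building block is then a finite sum of basic brackets $[{a_p}_{\lab}a_q]$ with polynomial prefactors, hence can be computed explicitly. The simplifications that make the final formula tractable are the identities
\[\sum_{k=1}^{4}\lambda_k\lambda_{k+4}=B_{\lambda\lambda},\qquad \sum_{k=1}^{4}(\lambda_k\de_{k+4}+\lambda_{k+4}\de_k)=C_{\lambda\de},\]
together with the (anti)symmetric contractions $\de_k\nu_l=\pm\de_5\de_6\de_7\de_8$ (whenever $k\in\{5,\ldots,8\}$ is the index missing from $\nu_l$) and the parallel $\lambda_k\mu_l=\pm\lambda_5\lambda_6\lambda_7\lambda_8$. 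These identities cause the intermediate monomials to collapse and convert the remaining sums into combinations of $B_{\lambda\lambda}$, $C_{\lambda\de}$, $C_{\mu\de}$, $C_{\lambda\alpha}$ and $B_{\mu\alpha}$ multiplying $\alpha$-generators, which is exactly the shape of the claimed right-hand side.

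The main obstacle is the sheer volume of terms and the delicate sign bookkeeping forced by the odd indices $5,\ldots,8$: moving any of the $\de_{r+i}$ past an odd element picks up a sign, and the $\lab$-variables also carry parity. The closure of $RE(4,4)$ is genuine and relies on the particular skew-symmetric sign pattern $\nu_1=\de_6\de_7\de_8,\ \nu_2=-\de_5\de_7\de_8,\ \nu_3=\de_5\de_6\de_8,\ \nu_4=-\de_5\de_6\de_7$ built into the definition of $\alpha_i$, so two useful consistency checks along the way are: (i) verifying the conformal skew-symmetry $[{\alpha_j}_{\lab}\alpha_i]=-(-1)^{p(\alpha_i)p(\alpha_j)}[{\alpha_i}_{-\lab-\deb}\alpha_j]$ directly on the final expression, and (ii) observing, via Corollary~\ref{free4}, that every surviving term on the right-hand side is a $\C[\lab,\deb]$-combination of $\alpha_1,\ldots,\alpha_8$. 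Together these yield both the explicit formula and the stated subalgebra conclusion.
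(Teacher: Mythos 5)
Your proposal follows essentially the same route as the paper: the authors likewise use sesquilinearity to reduce $[{\alpha_i}_{\lab}\alpha_j]$ to the four building-block brackets among $B_{\de\de}C_{\de a}$ and $B_{\nu a}$ (their formulas \eqref{cnucnu}--\eqref{cdecnu}), exploit the collapsing identities $\de_k\nu_l=\pm\de_5\de_6\de_7\de_8$, $\lambda_k\mu_l=\pm\lambda_5\lambda_6\lambda_7\lambda_8$, and then verify the resulting finite list of identities by computer, illustrating only the case $[{\alpha_5}_{\lab}\alpha_5]$ by hand. Your plan is sound; like the paper, it ultimately defers the full term-by-term verification to a mechanical (computer-assisted) check.
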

\begin{proof} The proof of this theorem consists of a finite number of computations that we verified with a computer. Recalling the definition of $\alpha_i$ in \eqref{twelve}, the following formulas can be useful for a direct check:
\begin{equation} \label{cnucnu}
[{B_{\nu a}}_\lab B_{\nu a}]=-B_{\mu\de} B_{\nu a},
\end{equation}

\begin{equation}
\label{cdecde}
[{C_{\de a}}_\lab C_{\de a}]=-(2B_{\lambda \lambda}+C_{\lambda \de}) C_{\de a},
\end{equation}

\begin{equation}
\label{cnucde}
[{B_{\nu a}}_\lab C_{\de a}]=\frac{1}{2}C_{\lambda\mu}C_{\lambda a}-B_{\mu \de} C_{\de a}-C_{\lambda \de} B_{\mu a} - B_{\mu \de} C_{\lambda a} +C_{\lambda \mu} C_{\de a},
\end{equation}

\begin{align}
\label{cdecnu}
[{C_{\de a}}_\lab B_{\nu a}]=&-C_{\lambda \de } B_{\rho a}-C_{\lambda \de}B_{\nu a}-2B_{\lambda \lambda} B_{\nu a}+C_{\lambda \nu} C_{\lambda a}+C_{\lambda \de} B_{\mu a}-B_{\mu \de} C_{\lambda a}\\
\nonumber&-\frac{1}{2}C_{\lambda \mu} C_{\de a}+\frac{1}{2}C_{\lambda \mu} C_{\lambda a}+2B_{\mu \de} C_{\de a}+C_{\lambda \de} B_{\sigma a}+2B_{\de \de } B_{\mu a},
\end{align}
where \begin{align*}
   &\rho_1=\lambda_6\de_7\de_8+\de_6\lambda_7\de_8+
\de_6\de_7\lambda_8, \,\,\, \rho_2=-(\lambda_5\de_7\de_8+\de_5\lambda_7\de_8+
\de_5\de_7\lambda_8),\\
&\rho_3=\lambda_5\de_6\de_8+\de_5\lambda_6\de_8+
\de_5\de_6\lambda_8,\,\, \, \rho_4=-(\lambda_5\de_6\de_7+\de_5\lambda_6\de_7+
\de_5\de_6\lambda_7),\\
&\rho_5=\rho_6=\rho_7=\rho_8=0,
\end{align*}
and 
 \begin{align*}
    \sigma_1&=\lambda_6\lambda_7\de_8+\de_6\lambda_7\lambda_8+
\lambda_6\de_7\lambda_8,\, \sigma_2=-(\lambda_5\lambda_7\de_8+\de_5\lambda_7\lambda_8+
\lambda_5\de_7\lambda_8),\\
\sigma_3&=\lambda_5\lambda_6\de_8+\de_5\lambda_6\lambda_8+
\lambda_5\de_6\lambda_8,\,
\sigma_4=-(\lambda_5\lambda_6\de_7+\de_5\lambda_6\lambda_7+
\lambda_5\de_6\lambda_7),\\
\sigma_5&=\sigma_6=\sigma_7=\sigma_8=0.
\end{align*}
As an example we
compute $[{\alpha_5}_\lab \alpha_5]$, recalling that $\alpha_5=\de_5 B_{\de \de} C_{\de a}+\de_5 B_{\nu a}$.
By Equation \eqref{cnucnu} we have
\begin{equation}\label{alph51}
[{\de_5 B_{\nu a}}_\lab \de_5 B_{\nu a}]=\lambda_5 \de_5 B_{\mu \de}B_{\nu a}=0
\end{equation}
since every term in $B_{\mu \de}$ either contains $\lambda_5 $ or $\de_5$.
By Equation \eqref{cdecde}, the properties of $\lab$-products, and recalling that $B_{\lambda \lambda}$, $C_{\lambda \de}$ and $B_{\de \de}$ are odd elements, we have
\begin{align}
\label{alph52}
[{\de_5 B_{\de \de} C_{\de a}}_\lab \de_5 B_{\de \de }C_{\de a}]&=\lambda_5 \de_5 C_{\lambda \lambda} (C_{\lambda \lambda}+C_{\lambda \de}+B_{\de \de})[{C_{\de a}}_\lab C_{\de a}]\\
\nonumber&=-\lambda_5 \de_5 C_{\lambda \lambda} B_{\de \de} C_{\lambda \de}C_{\de a}
\end{align}
since $B_{\lambda \lambda}^2=0=C_{\lambda \de}^2$.
Similarly we can compute, by Equation \eqref{cnucde},
\begin{align}\label{alph53}
[{\de_5 C_{\nu a}}_\lab \de_5 B_{\de \de }C_{\de a}]&=-\lambda_5 \de_5 (B_{\lambda \lambda}+C_{\lambda \de}+B_{\de \de}) [{B_{\nu a}}_\lab C_{\de a}]\\
\nonumber &=\lambda_5 \de_5 B_{\de \de} C_{\lambda \de} B_{\mu a},
\end{align}
since $\lambda_5C_{\lambda \mu}=0$, $\lambda_5\de_5B_{\mu \de}=0$,
$\lambda_5B_{\lambda \lambda}B_{\mu a}=0$,
and by Equation \eqref{cdecnu}
\begin{align}\label{alph54}
[{\de_5 B_{\de \de} C_{\de a}}_\lab \de_5 B_{\nu a}]&=-\lambda_5 \de_5 B_{\lambda \lambda} [{C_{\de a}}_\lab B_{\nu a}]\\
\nonumber &=\lambda_5 \de_5 B_{\lambda \lambda} C_{\lambda \de} B_{\rho a}+\lambda_5 \de_5 B_{\lambda \lambda} C_{\lambda \de} B_{\nu a}
\end{align}
since  $\lambda_5\de_5C_{\lambda \nu}=0$, $\lambda_5C_{\lambda \lambda}B_{\mu \de}=0$ and $\lambda_5\de_5 B_{\lambda \lambda} C_{\lambda \de}B_{\sigma a}=0$.
We have, by Equations \eqref{alph51},\eqref{alph52},\eqref{alph53},\eqref{alph54},
\begin{align*}
[{\alpha_5}_\lab \alpha_5]=-\lambda_5\de_5B_{\lambda \lambda} B_{\de \de}C_{\lambda \de}C_{\de a} +\lambda_5 \de_5 B_{\de \de} C_{\lambda \de} B_{\mu a}+\lambda_5 \de_5 B_{\lambda \lambda} C_{\lambda \de} B_{\rho a}+\lambda_5 \de_5 B_{\lambda \lambda} C_{\lambda \de} B_{\nu a}.
\end{align*}
Now we can observe that all terms in $B_{\de \de} C_{\lambda \de} B_{\mu a}+ B_{\lambda \lambda} C_{\lambda \de} B_{\rho a}$ which are not divisible by $\lambda_5$ or $\de_5$ cancel out and so we have
\[
[{\alpha_5}_\lab \alpha_5]=-\lambda_5\de_5B_{\lambda \lambda} B_{\de \de}C_{\lambda \de}C_{\de a}+\lambda_5 \de_5 B_{\lambda \lambda} C_{\lambda \de} B_{\nu a}=\lambda_5 B_{\lambda \lambda} C_{\lambda \de} \alpha_5
\]
and the proof is complete, since one can easily verify that $\lambda_5 B_{\mu \de}\alpha_5=0$.
\end{proof}
\begin{corollary}\label{properties}
The Lie  superalgebra $\mathcal A(RE(4, 4))$ has a $D$-conformal structure over $RE(4,4)$.
\end{corollary}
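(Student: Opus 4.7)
The plan is to verify each of the five conditions in Definition \ref{assumption} for $L=\mathcal A(RE(4,4))$ over $R=RE(4,4)$. Condition (1) is built into the statement. For condition (2), the explicit formula \eqref{twelve} shows that each generator $\alpha_i$ is a sum of terms $c\,\de^\beta a_k$ with $|\beta|=4$, hence of uniform degree $-2|\beta|+\deg(a_k)=-10$, so $RE(4,4)$ is a $\Z$-graded $\C[\deb]$-submodule of $RW(4,4)$. A crucial side-effect is that all degrees occurring in $RE(4,4)$ are even, so via \eqref{ten} the components $\mathcal A(RE(4,4))_{-1}$ and $\mathcal A(RE(4,4))_{-3}$ both vanish, and condition (4) holds automatically.

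For condition (3), I would establish the stronger claim that $\mathcal A(RE(4,4))_d=0$ for all $d\le -4$. A general degree-$d$ element is a linear combination of terms $\alpha_i\otimes\yb_K$ with $\ell(K)=(d+10)/2<4$. The defining relation $(\de_j+\de_{y_j})\tilde R\equiv 0$ in the annihilation algebra gives $\de^\beta a_k\otimes\yb_K\equiv \pm a_k\otimes\de_\yb^\beta\yb_K$, and the right-hand side vanishes whenever $|\beta|>\ell(K)$ since each application of $\de_{y_j}$ reduces the length of $\yb_K$ by one. Since every term of $\alpha_i$ has $|\beta|=4$, each contribution vanishes when $\ell(K)<4$, establishing condition (3).

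For condition (5), I would use the Lie superalgebra homomorphism $\bar\iota\colon \mathcal A(RE(4,4))\to\mathcal A(RW(4,4))\cong W(4,4)$ induced by the inclusion $RE(4,4)\subset RW(4,4)$ and Proposition \ref{Wiso}. Setting $\xi_i=\alpha_i\otimes y_5 y_6 y_7 y_8\in\mathcal A(RE(4,4))_{-2}$, the reduction above shows that among all terms of $\alpha_i$ only the summand proportional to $\de_5\de_6\de_7\de_8\,a_i$ survives (other terms of $\alpha_i$ have some even derivative index, which prevents $\beta$ from being a permutation of $(5,6,7,8)$), yielding $\bar\iota(\xi_i)=\pm c_i\,\de_{x_i}$ with $c_i\in\{1,2\}$ read off \eqref{twelve}. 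Hence $\xi_1,\ldots,\xi_8$ are linearly independent in $\mathcal A(RE(4,4))_{-2}$. A case-by-case reduction using Proposition \ref{relations} (for example, $\de_1\alpha_8=\de_8\alpha_1$ yields $\alpha_8\otimes y_5y_6y_7y_8\equiv\pm\alpha_1\otimes y_1y_5y_6y_7$ modulo the quotient relations) shows that the $\xi_i$'s also span $\mathcal A(RE(4,4))_{-2}$. The $\ad$-action of $\xi_i$ corresponds under $\bar\iota$ to $\ad\de_{x_i}$ on $W(4,4)$, which by Proposition \ref{Wiso} translates into the derivation $\de_{y_i}$ of the annihilation algebra; this gives injectivity of $\ad$ (the $\de_{y_j}$'s act linearly independently on higher-degree components) with image exactly $D=\langle\de_{y_1},\ldots,\de_{y_8}\rangle$. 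The main obstacle is to establish rigorously that the eight $\xi_i$'s span all of $\mathcal A(RE(4,4))_{-2}$, which rests on combining the freeness of $RE(4,4)$ over $\C[\de_1,\ldots,\de_4]$ from Corollary \ref{free4} with the complete list of relations in Proposition \ref{relations} to rule out any hidden relations in the quotient.
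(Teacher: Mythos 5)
Your proposal is correct and follows essentially the same route as the paper: it verifies the five conditions of Definition \ref{assumption} using the homogeneity of the $\alpha_i$ in degree $-10$, the observation that every summand of $\alpha_i$ carries exactly four $\de$'s (so that $\yb_K\alpha_i$ vanishes for $\ell(K)<4$, giving depth $2$ and the vanishing of the degree $-1$ and $-3$ components), and the identification of $y_5y_6y_7y_8\alpha_i$ with a nonzero multiple of $a_i$ acting as $\de_{y_i}$. Your extra care about spanning of $\mathcal A(RE(4,4))_{-2}$ via Proposition \ref{relations} and Corollary \ref{free4} is a slightly more explicit version of the paper's remark that every $y_{i_1}y_{i_2}y_{i_3}y_{i_4}\alpha_i$ is a scalar multiple of some $a_j$.
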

\begin{proof}
${} $
We check all the conditions of Definition \ref{assumption}.
\begin{enumerate}
\item This is obvious.
\item $RE(4,4)$ is $\mathbb Z$-graded: it is generated by the homogeneous elements $\alpha_1,\ldots,\alpha_8$ of degree $-10$;
\item the induced $\mathbb Z$-gradation on $\mathcal A(RE(4,4))$ has depth 2: although elements $\alpha_i$ have degree -10 all elements of the form $y_jy_hy_k\alpha_i$ vanish in the annihilation superalgebra; indeed every summand in the element $\alpha_i$ involves four derivatives with respect to the $y$-variables.
\item The homogeneous components of $\mathcal A(R)$ of degree -1 and -3 vanish, and in particular they are purely odd;
\item for all $i=1,\ldots,8$ we have that $y_5y_6y_7y_8 \alpha_i$ is a nonzero scalar multiple of $a_i$ in $\mathcal A(RE(4,4))_{-2}$ (and therefore it acts as a scalar multiple of $\de_{y_i}$ on $\mathcal A(RE(4,4))$) and for every set of indices $i_1,i_2,i_3,i_4$ we have that $y_{i_1}y_{i_2}y_{i_3}y_{i_4} \alpha_i$ is a scalar multiple of $a_j$ for some $j$. Therefore the map $ad:\mathcal A(R)_{-2}\rightarrow der(\mathcal A(R))$ is injective and its image is $\langle \de_{y_1},\ldots,\de_{y_8}\rangle$.
\end{enumerate}		
\end{proof}
\begin{theorem}\label{generators}
The annihilation superalgebra $\mathcal A(RE(4,4))$ is
spanned by the elements $f(y_1,y_2,y_3,y_4)y_5y_6y_7y_8\alpha_i$
with $f(y_1,y_2,y_3,y_4)\in\C[y_1,y_2,y_3,y_4]$ and $i=1,\dots,8$.
\end{theorem}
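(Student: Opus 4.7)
The strategy is to reduce a general element of $\mathcal A(RE(4,4))=\tilde R/\tilde\de\tilde R$ to a canonical form by exploiting the congruences $\tilde\de_k\equiv 0$ in the quotient, combined with Corollary \ref{free4} and Proposition \ref{relations}. First I would show that every element of $\mathcal A(RE(4,4))$ admits a representative of the form $\sum_{i=1}^8\alpha_i\otimes h_i(\yb)$ with $h_i\in\C[[\yb]]$. By Corollary \ref{free4}, every element of $RE(4,4)$ is uniquely of the form $\sum_i g_i(\de_1,\ldots,\de_4)\alpha_i$, so every element of $\tilde R$ is a sum of terms $g_i(\de_1,\ldots,\de_4)\alpha_i\otimes h_i$. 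For $k\in\{1,2,3,4\}$ the relation $\tilde\de_k(\alpha_i\otimes h)=\de_k\alpha_i\otimes h+\alpha_i\otimes\de_{y_k}h\equiv 0$ (no sign since $p_k=0$) gives the congruence $\de_k\alpha_i\otimes h\equiv-\alpha_i\otimes\de_{y_k}h$, and iterating transfers all polynomials in $\de_1,\ldots,\de_4$ from the $\alpha$-side to the $\yb$-side.

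The key step is then to further reduce to representatives whose odd $y$-part is the full monomial $y_5y_6y_7y_8$. Expand $h_i=\sum_{S\subseteq\{5,6,7,8\}}h_{i,S}(y_1,\ldots,y_4)\,y_S$, where $y_S$ is the ordered wedge product. For $j\in\{5,6,7,8\}\setminus S$,
\[
0\equiv\tilde\de_j\bigl(\alpha_i\otimes h_{i,S}\,y_{S\cup\{j\}}\bigr)=\de_j\alpha_i\otimes h_{i,S}\,y_{S\cup\{j\}}+(-1)^{p(\alpha_i)}\varepsilon\,\alpha_i\otimes h_{i,S}\,y_S,
\]
with $\varepsilon=\pm 1$ the sign from $\de_{y_j}y_{S\cup\{j\}}=\varepsilon\,y_S$. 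This expresses $\alpha_i\otimes h_{i,S}y_S$ as a scalar multiple of $\de_j\alpha_i\otimes h_{i,S}y_{S\cup\{j\}}$. By Proposition \ref{relations} (parts (1)--(5) combined), for all $j\in\{5,6,7,8\}$ and $i\in\{1,\ldots,8\}$ the product $\de_j\alpha_i$ is a $\C$-linear combination of elements $\de_k\alpha_l$ with $k\in\{1,2,3,4\}$; in particular, case (4) with its inhomogeneous correction $-2\sum_k\de_k\alpha_{k+4}$ handles the delicate situation $j=i+4$. Applying the first reduction once more then replaces each $\de_k\alpha_l\otimes h_{i,S}y_{S\cup\{j\}}$ by $-\alpha_l\otimes(\de_{y_k}h_{i,S})y_{S\cup\{j\}}$, which has odd support of size $|S|+1$.

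Iterating this procedure at most $4-|S|$ times one reaches $S=\{5,6,7,8\}$. Each step only applies partial derivatives $\de_{y_k}$ with $k\le 4$ to the polynomial coefficient $h_{i,S}$, so polynomiality (more generally, membership in $\C[[y_1,\ldots,y_4]]$) is preserved throughout; this yields a representative $\sum_l\alpha_l\otimes g_l(y_1,\ldots,y_4)\,y_5y_6y_7y_8$, and expanding each $g_l$ into monomials in $y_1,\ldots,y_4$ produces the claimed spanning set. The main obstacle is essentially bookkeeping of signs ($\varepsilon$, $(-1)^{p(\alpha_i)}$) and of the precise rewriting provided by Proposition \ref{relations}; conceptually, the whole argument rests on the single observation that any odd derivative $\de_j\alpha_i$ can be absorbed into even derivatives, which are in turn transferred harmlessly onto the $\yb$-coefficients via $\tilde\de_k\equiv 0$.
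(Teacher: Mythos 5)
Your proof is correct and follows essentially the same route as the paper's: the paper's argument consists precisely of ``repeated applications of the relations in Proposition \ref{relations}'', illustrated by one worked example that performs exactly your reduction (insert the missing odd variable via $\tilde\de_j\equiv 0$, rewrite $\de_j\alpha_i$ with $j\geq 5$ as a combination of $\de_k\alpha_l$ with $k\leq 4$ using the relations, and transfer the even derivatives back onto the $\yb$-coefficient). Your write-up simply makes the induction on the odd support, and the preliminary normalization via Corollary \ref{free4}, explicit.
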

\begin{proof} The result follows by repeated applications of the relations in Proposition \ref{relations}. We show an explicit example to illustrate this fact:
\begin{align*}
y_1y_2y_5y_6y_7\alpha_6&=y_1y_2y_5y_6y_7\de_{y_8}y_8\alpha_6=y_1y_2y_5y_6y_7y_8\de_8\alpha_6\\
&=\frac{1}{2}y_1y_2y_5y_6y_7y_8(\de_1\alpha_3-\de_3\alpha_1)\\
&=
-\frac{1}{2}\de_{y_1}(y_1y_2y_5y_6y_7y_8)\alpha_3=-\frac{1}{2}y_2y_5y_6y_7y_8\alpha_3.
\end{align*}
\end{proof}

\section{Embedding $E(4,4)$ into $W(4,4)$}
In this section we want to describe an explicit embedding  of the exceptional Lie superalgebra $E(4,4)$ into $W(4,4)$. As a consequence of this fact we show that $E(4,4)\cong \mathcal A(RE(4,4))$ and in particular that $E(4,4)$ has a $D$-conformal structure. 

Recall that
the Lie superalgebra $E(4,4)$ is defined as follows (\cite[\S 5.3]{CK2}): its even part $\g_{\bar{0}}$ is the Lie algebra $W_4$ of vector fields in four even indeterminates  with coefficients in the field of formal power series, and its odd part $E(4,4)_{\bar{1}}$ is isomorphic,  as a $E(4,4)_{\bar{0}}$-module, to $\Omega^1(4)^{-1/2}$. This means that 
$E(4,4)_{\bar{1}}$ is the space of all differential one-forms in four even indeterminates with coefficients in the field of formal power series and, for $X\in E(4,4)_{\bar{0}}$,
$\omega\in E(4,4)_{\bar{1}}$,
$$[X,\omega]=L_X(\omega)-\frac{1}{2}\diver(X)\omega,$$
where $L_X(\omega)$ denotes the Lie derivative of the one-form $\omega$ along the vector field $X$ (see also \cite[Definition 2.5]{CK}). Besides, for $\omega_1, \omega_2\in E(4,4)_{\bar{1}}$:
$$[\omega_1, \omega_2]=d\omega_1\wedge \omega_2+\omega_1\wedge d\omega_2$$
where three-forms are identified with vector fields via contraction with the standard volume form.

\begin{remark}\rm
In \cite[\S 2.1.3]{K77}, in the classification of finite-dimensional Lie superalgebras, it appears the Lie superalgebra $p(4)$ (denoted in \cite{K77}  by $p(3)$). This is
a simple finite-dimensional Lie superalgebra with the following  consistent irreducible  $\Z$-grading:
$$p(4)=p(4)_{-1}\oplus p(4)_0\oplus p(4)_1$$
where $p(4)_0\cong \frak{sl}_4$ and $p(4)_{-1}\cong \Lambda^2((\C^4)^*)$, 
$p(4)_1\cong S^2\C^4$, as $\frak{sl}_4$-modules. The Lie superalgebra $p(4)$ has a unique, up to isomorphisms, 
non-trivial central extension that we will denote by $\hat{p}(4)$ (\cite[Example 3.6]{K}, \cite{S}):
observe that $\Lambda^2((\C^4)^*)$ is isomorphic to the standard $\frak{so}_6$-module, with scalar product $(\cdot,\cdot)$; define 
$\varphi:\Lambda^2(p(4))\rightarrow \C$ by setting, for $x\in p(4)_i, y\in p(4)_j$,
$$\varphi(x,y):=\left\{\begin{array}{cc}
(x,y) & {\mbox{if}}\, i=j=-1\\
0 & {\mbox{otherwise.}}
\end{array}\right.$$
Then $\hat{p}(4)$ is the central extension of $p(4)$ defined by this cocycle.
Therefore one has the following $\Z$-graded Lie superalgebra
$$\hat{p}(4)=\hat{p}(4)_{-2}\oplus\hat{p}(4)_{-1}\oplus \hat{p}(4)_0\oplus \hat{p}(4)_1$$
with $\hat{p}(4)_{-2}\cong\C$, central.
\end{remark}

\bigskip

The Lie superalgebra $L=E(4,4)$ has, up to conjugation, only one irreducible $\Z$-grading $L=\prod_{j\geq -1} \g_j$, called the principal grading, defined by setting $\deg(x_i)=1$ and $\deg d=-2$ (\cite[Corollay 9.8]{CK}).
It is a grading of depth 1, where $\g_0$ is isomorphic to $\hat{p}(4)$ and
$\g_{-1}\cong \C^{4|4}$. In this isomorphism we have that $\hat{p}(4)_{1}$ is spanned by the one-forms $x_idx_j+x_jdx_i$ (with $i,j=1,2,3,4$), $\hat{p}(4)_0$ is spanned by the vector fields $x_i\de_{x_j}$ and $x_i\de_{x_i}-x_j\de_{x_j}$ (with $i,j=1,2,3,4$, $i\neq j$), $\hat{p}(4)_{-1}$ is spanned by the one-forms $x_idx_j-x_jdx_i$ (with $i,j=1,2,3,4)$ and $\hat{p}(4)_{-2}$ is spanned by $x_1\de_{x_1}+x_2\de_{x_2}+x_3\de_{x_3}+x_4\de_{x_4}$.

Let us fix the Borel subalgebra $\langle x_i\partial_j, h_{ij}=x_i\partial_i-x_j\partial_j ~|~ i<j\rangle$ of $(\g_0)_0=\frak{sl}_4$ and  consider the usual set of simple roots of the corresponding root system, given by $\{\alpha_{12},\alpha_{23},\alpha_{34}\}$. We let $\Lambda$ be the weight lattice of $\frak{sl}_4$ and we express all weights of $\frak{sl}_4$ using their coordinates with respect to the fundamental weights $\omega_{12},\omega_{23},\omega_{34}$, i.e., for $\lambda\in \Lambda$ we write $\lambda=(\lambda_{12},\lambda_{23},\lambda_{34})$ for some $\lambda_{i\,i+1}\in \mathbb Z$ to mean $\lambda=\lambda_{12}\omega_{12}+\lambda_{23}\omega_{23}+\lambda_{34}\omega_{34}$.
If $\lambda=(a,b,c)\in \Lambda$ is a dominant weight we shall denote by $F(\lambda)=F(a,b,c)$ the irreducible $\mathfrak{sl}_4$-module of highest 
weight $\lambda$.

The component $\g_1$ of the principal $\Z$-grading of $L$ is an irreducible $\g_0$-module and, as an $\frak{sl}_4$-module, it decomposes as follows:
$\g_1=V_1\oplus V_2\oplus V_3\oplus V_4$ where  
$V_1\cong F(2,0,1)$, $V_2\cong F(1,0,0)$, $V_3\cong F(3,0,0)$ and $V_4\cong F(1,1,0)$.

\begin{remark}\rm
We point out that one can construct a graded embedding of the Lie superalgebra $\g_0=\hat{p}(4)$ into the exceptional Lie superalgebra $E(5,10)$. (For the description of $E(5,10)$ we refer to \cite{K} and \cite{CCK2}). 
Indeed the following map
\begin{align*}
\sum_{i=1}^4x_i\de_{x_i}&\mapsto \frac{1}{2}\de_{x_5},\\
x_idx_j-x_jdx_i&\mapsto dx_i\wedge dx_j,\\
x_i\de_{x_j}&\mapsto x_i\de_{x_j}, \hspace{41mm}\mbox{for}\, i\neq j,\\
x_i\de_{x_i}-x_{i+1}\de_{x_{i+1}}&\mapsto x_i\de_{x_i}-x_{i+1}\de_{x_{i+1}},\hspace{20mm}\mbox{for}\,\, i=1,\dots 3,\\
x_idx_j+x_jdx_i&\mapsto x_idx_j\wedge dx_5+x_jdx_i\wedge dx_5,\\
\end{align*}
defines an embedding of  $\g_0=\hat{p}(4)$ into $E(5,10)$. However, this embedding does not extend to the whole $E(4,4)$ \cite{CK2}.
\end{remark}

\begin{definition} A $\Z$-graded Lie superalgebra 
$L=\prod \g_i$ is called transitive if it satisfies the following property: if $x\in \g_i$, $i\geq 0$,  is such that $[x,\g_{-1}]=0$, then $x=0$.
\end{definition}
We now recall the following simple embedding theorem which generalizes a standard result for Lie algebras (\cite{Blat}, \cite[\S 5.4]{K77}). 
\begin{theorem}
If ${L}=\prod_{i\geq -1}\g_i$ is a transitive $\Z$-graded Lie superalgebra with $\dim(\mathfrak{g}_{-1})_{\bar{0}}=m$, $\dim(\mathfrak{g}_{-1})_{\bar{1}}=n$, then there is an embedding 
\[\mathfrak{g}\rightarrow W(r,s)\]
mapping the $\Z$-grading of $L$ to the principal grading of $W(r,s)$.
\end{theorem}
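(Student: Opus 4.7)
The plan is to construct the embedding explicitly by realizing $L$ as a Lie superalgebra of formal vector fields on the $(m|n)$-dimensional superspace attached to $\g_{-1}$. Fix a homogeneous basis $e_1,\ldots,e_{m+n}$ of $\g_{-1}$, with $e_1,\ldots,e_m$ even and $e_{m+1},\ldots,e_{m+n}$ odd, and introduce dual coordinates $x_1,\ldots,x_{m+n}$ of matching parity; set $r=m$, $s=n$. Conceptually, $L_{\geq 0}=\prod_{i\geq 0}\g_i$ plays the role of the isotropy of the origin, the quotient $L/L_{\geq 0}$ is naturally identified with $\g_{-1}$, and the adjoint action of $L$ on this coset space induces an action by derivations on its formal completion, which is precisely $W(m,n)$.

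Explicitly, for $a\in\g$ I would set
\[
\varphi(a)=\sum_{j=1}^{m+n}\Bigl(\sum_{K}\frac{\epsilon_K}{|K|!}\,e_j^{*}\bigl([\cdots[[a,e_{k_1}],e_{k_2}],\ldots,e_{k_N}]\bigr)\,\xb^K\Bigr)\partial_{x_j},
\]
where $K=(k_1,\ldots,k_N)$ ranges over multi-indices in $\{1,\ldots,m+n\}$, $\epsilon_K$ is the Koszul sign prescribed by the super-conventions, and $e_j^{*}$ projects onto the $e_j$-component of $\g_{-1}$. The first step is to verify that $\varphi$ respects the gradings: applying $N$ successive brackets with elements of $\g_{-1}$ to $a\in\g_k$ lands in $\g_{k-N}$, which contributes only when $N=k+1$, producing a monomial of length $k+1$ in $\xb$ multiplied by $\partial_{x_j}$, and hence of principal degree $k$. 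In particular $\varphi(e_i)=\partial_{x_i}$, since the iterated brackets of positive length lie in $\g_{\leq -2}=0$.

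The second step is to verify that $\varphi$ is a morphism of Lie superalgebras. The conceptual argument is to interpret $\varphi(a)$ as the infinitesimal action of $a$ on formal functions on $L/L_{\geq 0}$ coming from the adjoint action of $L$; the bracket relation is then automatic from the Jacobi identity in $\g$ together with the Leibniz rule. A hands-on check is possible but requires careful bookkeeping of Koszul signs, and this is the one laborious point of the proof.

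The third step is injectivity, and this is where transitivity enters. The kernel $I$ of $\varphi$ is a $\Z$-graded ideal of $L$; since $\varphi(e_i)=\partial_{x_i}$ is a nonzero constant vector field for each $i$, one has $I\cap\g_{-1}=0$, so $I\subseteq L_{\geq 0}$. If $a\in I\cap\g_k$ with $k\geq 0$, then $\varphi([a,e_i])=[\varphi(a),\varphi(e_i)]=0$, hence $[a,e_i]\in I\cap\g_{k-1}$ for every $i$; induction on $k$ yields $[a,\g_{-1}]=0$, and transitivity forces $a=0$.
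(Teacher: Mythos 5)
Your construction is exactly the standard one behind this statement: the paper itself gives no proof but simply recalls the theorem from \cite{Blat} and \cite[\S 5.4]{K77}, and the realization you write down --- formal coordinates on $L/L_{\geq 0}$ dual to a homogeneous basis of $\g_{-1}$, with coefficients read off from iterated brackets, degree bookkeeping showing $\g_k$ lands in principal degree $k$, and transitivity killing the kernel via a graded-ideal argument --- is precisely the argument of those references. So I regard this as correct and essentially the same approach. The one place where you assert rather than prove is the homomorphism property; the clean way to close it (and the way Blattner does) is to identify $\C[[\xb]]$ with the coinduced module $\mathrm{Hom}_{U(L_{\geq 0})}(U(L),\C)$ of the trivial $L_{\geq 0}$-module, observe that $L$ acts on it by continuous derivations because it acts on the coalgebra $U(L)/U(L)L_{\geq 0}\cong S(\g_{-1})$ by coderivations, and check that the resulting map $L\rightarrow W(m,n)$ is given by your explicit formula; this avoids the Koszul-sign bookkeeping entirely. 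A minor point worth a sentence in a written-up version: since $L=\prod_{i\geq -1}\g_i$ may have infinite products, one should note that $\varphi$ preserves degrees and $W(m,n)=\prod_k W(m,n)_k$ in the principal grading, so $\varphi$ extends continuously to all of $L$ and its kernel is a closed graded ideal, after which your induction applies verbatim.
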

In order to describe the embedding of $E(4,4)$ into $W(4,4)$ explicitly, we will denote by
$X=\{x_1,\dots,x_4\}$ the (even) variables of $E(4,4)$ and by $Y=\{y_1,\dots,y_4\}$ the corresponding (even) variables of $W(4,4)$; accordingly, we will denote by $\{y_5, \dots, y_8\}$ the odd variables of $W(4,4)$.
\begin{theorem}\label{Embedding}
The following map $\Phi$ defines an embedding of $E(4,4)$ into $W(4,4)$, preserving the principal grading:
for $r,i=1,\dots,4$,
	\begin{align*}
	f(X)\de_{x_r}&\mapsto f(Y)\de_{y_r}+\sum_{i=1}^4 \de_{y_i}f(Y)\beta_1(r,i)+\sum _{i=1}^4
\de_{y_r}\de_{y_i} f(Y) \beta_2(i)+\sum_{i,l=1}^{4} \de_{y_r}\de_{y_i}\de_{y_l} f(Y) \beta_3(i,l)\\
f(X) dx_i& \mapsto f(Y)\de_{y_{i+4}}+\sum_{j=1}^4 \de_{y_j}f(Y) \gamma_1(i,j)+\sum_{j,l=1}^4\de_{y_j}\de_{y_l}f(Y) \gamma_2(i,j,l),	\end{align*}

\medskip

\noindent
where

$
\beta_1(r,i)=\begin{cases}
	-y_{r+4}\de_{y_{i+4}}&\textrm{if } r\neq i,\\
	-y_{r+4}\de_{y_{r+4}}+\frac{1}{2} \sum_{l=5}^8y_{l}\de_{y_{l}}& \textrm{if }r=i,
\end{cases}$

\noindent
and

$\begin{array}{lcl}
\beta_2(i)&=&-\frac{1}{2}(y_{j+4}y_{h+4} \de_{y_k}+y_{h+4}y_{k+4}\de_{y_j}+y_{k+4}y_{j+4}\de_{y_h}) , 
\\
\\
\beta_3(i,l)&=&\frac{1}{2}y_{j+4}y_{h+4}y_{k+4}\de_{y_{l+4}} ,
\\
\\
\gamma_1(i,j)&=&y_{h+4}\de_{y_k}-y_{k+4}\de_{y_h} ,
\\
\\
\gamma_2(i,j,l)&=&-y_{h+4}y_{k+4}\de_{y_{l+4}}, 
\end{array}$

\medskip

\noindent
 for $i,j,h,k\in\{1,2,3,4\}$ such that $\epsilon(ijhk)=1$,
 and $l\in \{1,2,3,4\}$.
\end{theorem}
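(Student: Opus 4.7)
The proof of Theorem \ref{Embedding} breaks into three parts: well-definedness and grading preservation; verification of the bracket-preservation property; and injectivity.

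First, one should check that the quantities $\beta_2(i)$, $\beta_3(i,l)$, $\gamma_1(i,j)$ and $\gamma_2(i,j,l)$ are independent of the chosen ordering $(j,h,k)$ with $\epsilon(ijhk)=1$, which is a routine symmetry check from the antisymmetry of $\epsilon$. Next, inspect degrees: recall $\deg(x_i)=1$ and $\deg(d)=-2$ in the principal grading of $E(4,4)$, while $\deg(y_i)=-\deg(\de_{y_i})=2$ in that of $W(4,4)$. Each summand in $\Phi(f(X)\de_{x_r})$ and $\Phi(f(X)dx_i)$ is then readily seen to have $W(4,4)$-degree equal to twice the $E(4,4)$-degree of its preimage, so $\Phi$ matches the two principal gradings up to the overall doubling factor intrinsic to the normalization.

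The main task is to verify that $\Phi$ preserves brackets. A useful reduction: the embedding theorem cited just before the statement guarantees the abstract existence of some grading-preserving embedding $E(4,4)\hookrightarrow W(4,4)$, and by transitivity such embeddings are essentially unique up to the adjoint action of the degree-zero part of $W(4,4)$. Therefore it suffices to check bracket compatibility of $\Phi$ on a generating subset, which by transitivity reduces further to brackets involving the constant vector fields $\de_{x_r}$ and the constant one-forms $dx_i$ in $\g_{-1}$ together with degree-zero generators $x_i\de_{x_j}$, $h_{ij}$ of $\g_0\cong\hat p(4)$. Three bracket types must be examined: (i) the Lie bracket of two vector fields, which matches directly with the restriction of the Lie bracket of $W(4,4)$; (ii) the bracket $[X,\omega]=L_X(\omega)-\tfrac12\diver(X)\omega$, whose divergence correction is encoded in the coefficient $\tfrac12\sum_{l=5}^8 y_l\de_{y_l}$ appearing in $\beta_1(r,r)$; and (iii) the bracket $[\omega_1,\omega_2]=d\omega_1\wedge\omega_2+\omega_1\wedge d\omega_2$, which must be interpreted as a vector field via contraction with the standard volume form $dx_1\wedge\cdots\wedge dx_4$ and matched to the super-anticommutator in $W(4,4)$ of the images $\Phi(\omega_1),\Phi(\omega_2)$.

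Injectivity is then immediate from simplicity of $E(4,4)$: the kernel of $\Phi$ is either trivial or all of $E(4,4)$, but $\Phi(\de_{x_r})=\de_{y_r}\neq 0$ forces $\ker\Phi=0$. The main obstacle will be task (iii): translating the wedge-product formula for the odd-odd bracket in $E(4,4)$ into the super-commutator of odd vector fields in $W(4,4)$ requires careful bookkeeping with the volume-form identification and the $\epsilon$-symbol, and the specific forms of $\gamma_1$ and $\gamma_2$ (and implicitly of $\beta_2,\beta_3$ through transitivity and the $\g_0$-action) are precisely engineered so that, after substantial cancellation, the right-hand side reproduces $\Phi([\omega_1,\omega_2])$.
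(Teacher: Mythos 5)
Your overall strategy---reduce the bracket check via transitivity and then verify the surviving brackets explicitly---is in the same spirit as the paper's proof, but the reduction step as you state it contains a genuine gap. You claim it suffices to check bracket compatibility of $\Phi$ ``on a generating subset'', namely on $\de_{x_r}$, $dx_i$ and the degree-zero generators $x_i\de_{x_j}$, $h_{ij}$ of $\g_0\cong\hat{p}(4)$. For a linear map that is already prescribed by an explicit formula on all of $E(4,4)$, verifying $\Phi([a,b])=[\Phi(a),\Phi(b)]$ only for pairs of generators does not imply that $\Phi$ is a homomorphism. The reduction that actually works, and that the paper uses, is an induction on $i+j$ for $a\in\g_i$, $b\in\g_j$: one first proves $\Phi([c,a])=[\Phi(c),\Phi(a)]$ for every constant $c\in\g_{-1}$ and every \emph{arbitrary} $a$ (i.e.\ $f(X)\de_{x_s}$ and $f(X)dx_s$ with $f$ an arbitrary power series, not just generators); then the defect $\Phi([a,b])-[\Phi(a),\Phi(b)]$ is annihilated by $\ad\de_{y_i}$ for all $i=1,\dots,8$ (using $\Phi(\de_{x_r})=\de_{y_r}$, $\Phi(dx_i)=\de_{y_{i+4}}$, so that $\Phi(\g_{-1})$ exhausts the bottom component of $W(4,4)$) and lies in non-negative degree, hence vanishes by transitivity of $W(4,4)$. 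Restricting the base case to degree-zero generators of $\g_0$ would not let this induction close. Likewise, the appeal to abstract existence of an embedding and its uniqueness up to conjugation by the degree-zero part cannot replace this: to identify your explicit $\Phi$ with the abstractly constructed embedding you would have to perform exactly the same computations.

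Beyond the reduction, the actual content of the proof is those computations, and they are absent from your proposal: one needs $[\de_{y_r},\beta_1(s,i)]=[\de_{y_r},\beta_2(i)]=[\de_{y_r},\beta_3(i,l)]=0$ and $[\de_{y_r},\gamma_1(s,j)]=[\de_{y_r},\gamma_2(s,j,l)]=0$ (so that $\Phi$ intertwines the even constant fields), then $[\de_{y_{s+4}},\beta_1(r,i)]=\tfrac12\delta_{i,r}\de_{y_{s+4}}$, $[\de_{y_{s+4}},\beta_2(i)]=\gamma_1(s,i)$, $[\de_{y_{s+4}},\beta_3(i,l)]=\gamma_2(s,i,l)$ to match $[dx_s,f(X)\de_{x_r}]=\tfrac12(\de_{x_r}f)\,dx_s$ (this is where the $-\tfrac12\diver$ term is tested), and finally the matching of $[dx_j,f(X)dx_i]=-(\de_{x_h}f)\de_{x_k}+(\de_{x_k}f)\de_{x_h}$ with $[\de_{y_{j+4}},\Phi(f(X)dx_i)]$, where the $\beta_1$'s reappear on the right-hand side. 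You explicitly defer the odd--odd case as ``the main obstacle'', so as written the proposal is a plan rather than a proof. The parts you do settle (independence of $\beta_2,\beta_3,\gamma_1,\gamma_2$ from the ordering with $\epsilon(ijhk)=1$, the degree count, and injectivity from simplicity of $E(4,4)$) are correct but are the routine portions of the argument.
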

\begin{proof} We want to show that,  for every $a\in \g_i,
b\in\g_j$, for every $i,j\geq -1$,  we have:
\begin{equation}
\Phi([a,b])=[\Phi(a),\Phi(b)].
\label{isom}
\end{equation}
By the transitivity of the principal grading of the Lie superalgebra $W(4,4)$,
and using induction on $i+j$,
it is enough to prove (\ref{isom}) for $i=-1$.

For $r,s=1,\dots, 4$ we have:

\[[\de_{x_r}, f(X)\de_{x_s}]=(\de_{x_r}f)\de_{x_s},\,\,\,\,\,
[\de_{x_r}, fdx_s]=(\de_{x_r}f)dx_s\]

\[\Phi(\de_{x_r}(f)\de_{x_s})=(\de_{y_r}f)\de_{y_s}+\sum_{i=1}^4
(\de_{y_i}\de_{y_r}f)\beta_1(s,i)+\sum_{i=1}^4
(\de_{y_s}\de_{y_i}\de_{y_r}f)\beta_2(i)+\sum_{i,l=1}^4(\de_{y_i}\de_{y_s}\de_{y_l}
\de_{y_r}f)\beta_3(i,l).\]
It follows that
\[[\Phi(\de_{x_r}),\Phi( f(X)\de_{x_s})]=\Phi((\de_{x_r}f)\de_{x_s})\]
since, for $r,s,i,l=1,\dots, 4$,
\[
[\de_{y_r}, \beta_1(s,i)]=0,\,\,[\de_{y_r}, \beta_2(i)]=0,\,\,[\de_{y_r}, \beta_3(i,l)]=0.
\]
Similarly, for $r=1,\dots,4$,

\[[\Phi(\de_{x_r}), \Phi(fdx_s)]=\Phi((\de_{x_r}f)dx_s)\]
since, for $r,s,j,l=1,\dots,4$, we have:
\[[\de_{y_r}, \gamma_1(s,j)]=0,\,\, [\de_{y_r}, \gamma_2(s,j,l)]=0.
\]
Besides, for $r\neq s$,
\[[dx_s, f(X)\de_{x_r}]=\frac{1}{2}
(\de_{x_r}f)dx_s,\]
 and
\[[\de_{y_{s+4}}, \Phi(f(X)\frac{\de}{\de{x_r}})]=\frac{1}{2}(\de_{y_r}f\de_{y_{s+4}}
+\frac{1}{2}\sum_{j=1}^4\de_{y_j}\de_{y_r}f\gamma_1(s,j)+
\sum_{l,j=1}^4\de_{y_j}\de_{y_l}\de_{y_r}f\gamma_2(s,j,l)),\]
since
\begin{align*}
[\de_{y_{s+4}}, \beta_1(r,i)]&=\frac{1}{2}\delta_{i,r}\de_{y_{s+4}}\\
[\de_{y_{s+4}}, \beta_2(i)]&=\gamma_1(s,i)\\
[\de_{y_{s+4}}, \beta_3(i,l)]&=\gamma_2(s,i,l).
\end{align*}
Similarly, for $r=1,\dots,4$,
\[\Phi([dx_r, f(X)\de_{x_r}])=[\Phi(dx_r), \Phi(f(X)\de_{x_r}].\]
Finally,
\[\Phi([dx_r, f(X)dx_r])=0=[\de_{y_{r+4}},\Phi(f(X)dx_r),\]
since
$\de_{y_{r+4}}\gamma_1(r,j)=0=\de_{y_{r+4}}\gamma_2(r,j,l)$,
and, for $i\neq j$, 
\[[dx_j,f(X)dx_i]=-(\de_{x_h}f)\de_{x_k}+(\de_{x_k}f)\de_{x_h}\]
\begin{align*}
[\de_{y_{j+4}}, \Phi(f(X)dx_i)]&=-(\de_{y_h}f)\de_{y_k}+(\de_{y_k}f)\de_{y_h}+
\sum_{l=1}^4(\de_{y_h}\de_{y_l}f)y_{k+4}\de_{y_{l+4}}-\sum_{l=1}^4(\de_{y_k}\de_{y_l}f)y_{h+4}\de_{y_{l+4}}\\
&=-(\de_{y_h}f)\de_{y_k}+(\de_{y_k}f)\de_{y_h}+
\sum_{l=1}^4(\de_{y_h}\de_{y_l}f)\beta_1(k,l)-\sum_{l=1}^4(\de_{y_k}\de_{y_l}f)\beta_1(h,l)\\
&=\Phi(-(\de_{x_h}f)\de_{x_k}+(\de_{x_k}f)\de_{x_h}).
\end{align*}
\end{proof}
\begin{example} For the convenience of the reader we explicitly write the image of the vector field $f(x_1,x_2,x_3,x_4)\de_{x_1}$ and of the form $f(x_1,x_2,x_3,x_4)dx_1$ under the map $\Phi$:

\begin{align*}	
	f(x_1,x_2,x_3,x_4)\de_{x_1}\mapsto &f(Y)\de_{y_1}+\frac{1}{2}\de_{y_1}f(Y)(-y_5\de_{y_5}+y_6\de_{y_6}+y_7\de_{y_7}+y_8 \de_{y_8})\\
	&-\de_{y_2}f(Y)y_5 \de_{y_6}-\de_{y_3}f(Y)y_5 \de_{y_7}-\de_{y_4}f(Y)y_5 \de_{y_8}\\
	&+\frac{1}{2}\de_{y_1}\de_{y_1} f(Y)(-y_6y_7\de_{y_4}-y_7y_8\de_{y_2}+y_6y_8 \de_{y_3})\\
	&+\frac{1}{2}\de_{y_1}\de_{y_2} f(Y)(-y_5y_8
	\de_{y_3}+y_7y_8\de_{y_1}+y_5y_7 \de_{y_4})\\
	&+\frac{1}{2}\de_{y_1}\de_{y_3} f(Y)(-y_5y_6
	\de_{y_4}-y_6y_8\de_{y_1}+y_5y_8 \de_{y_2})\\
	&+\frac{1}{2}\de_{y_1}\de_{y_4} f(Y)(y_5y_6
	\de_{y_3}-y_5y_7\de_{y_2}+y_6y_7 \de_{y_1})\\
	&+\frac{1}{2}\de_{y_1}\de_{y_1}\de_{y_1}f(Y) y_6y_7y_8 \de_{y_5}
	+\frac{1}{2}\de_{y_1}\de_{y_1}\de_{y_2}f(Y) y_6y_7y_8 \de_{y_6}\\
	&+\frac{1}{2}\de_{y_1}\de_{y_1}\de_{y_3}f(Y) y_6y_7y_8 \de_{y_7}
	+\frac{1}{2}\de_{y_1}\de_{y_1}\de_{y_4}f(Y) y_6y_7y_8 \de_{y_8}\\
	&-\frac{1}{2}\de_{y_1}\de_{y_2}\de_{y_1}f(Y) y_5y_7y_8 \de_{y_5}
	-\frac{1}{2}\de_{y_1}\de_{y_2}\de_{y_2}f(Y) y_5y_7y_8 \de_{y_6}\\
	&-\frac{1}{2}\de_{y_1}\de_{y_2}\de_{y_3}f(Y) y_5y_7y_8 \de_{y_7}
	-\frac{1}{2}\de_{y_1}\de_{y_2}\de_{y_4}f(Y) y_5y_7y_8 \de_{y_8}\\
	&+\frac{1}{2}\de_{y_1}\de_{y_3}\de_{y_1}f(Y) y_5y_6y_8 \de_{y_5}
	+\frac{1}{2}\de_{y_1}\de_{y_3}\de_{y_2}f(Y) y_5y_6y_8 \de_{y_6}\\
	&+\frac{1}{2}\de_{y_1}\de_{y_3}\de_{y_3}f(Y) y_5y_6y_8 \de_{y_7}
	+\frac{1}{2}\de_{y_1}\de_{y_3}\de_{y_4}f(Y) y_5y_6y_8 \de_{y_8}\\
	&-\frac{1}{2}\de_{y_1}\de_{y_4}\de_{y_1}f(Y) y_5y_6y_7 \de_{y_5}
	-\frac{1}{2}\de_{y_1}\de_{y_4}\de_{y_2}f(Y) y_5y_6y_7 \de_{y_6}\\
	&-\frac{1}{2}\de_{y_1}\de_{y_4}\de_{y_3}f(Y) y_5y_6y_7 \de_{y_7}
	-\frac{1}{2}\de_{y_1}\de_{y_4}\de_{y_4}f(Y) y_5y_6y_7 \de_{y_8}.
\end{align*}
and
\begin{align*}
	f(X)dx_1\mapsto
	&=f(Y)\de_{y_5}
	+\de_{y_2}f(Y)(y_7\de_{y_4}-y_8\de_{y_3})\\
	&+\de_{y_3}f(Y)(y_8\de_{y_2}-y_6\de_{y_4})+\de_{y_4}f(Y)(y_6\de_{y_3}-y_7\de_{y_2})\\
	&+\de_{y_2}\de_{y_1}f(Y)(-y_7y_8\de_{y_5})+\de_{y_2}\de_{y_2}f(Y)(-y_7y_8\de_{y_6})\\
	&+\de_{y_2}\de_{y_3}f(Y)(-y_7y_8\de_{y_7})+\de_{y_2}\de_{y_4}f(Y)(-y_7y_8\de_{y_8})\\
	&+\de_{y_3}\de_{y_1}f(Y)(y_6y_8\de_{y_5})+\de_{y_3}\de_{y_2}f(Y)(y_6y_8\de_{y_6})\\
	&+\de_{y_3}\de_{y_3}f(Y)(y_6y_8\de_{y_7})+\de_{y_3}\de_{y_4}f(Y)(y_6y_8\de_{y_8})\\
	&+\de_{y_4}\de_{y_1}f(Y)(-y_6y_7\de_{y_5})+\de_{y_4}\de_{y_2}f(Y)(-y_6y_7\de_{y_6})\\
	&+\de_{y_4}\de_{y_3}f(Y)(-y_6y_7\de_{y_7})+\de_{y_4}\de_{y_4}f(Y)(-y_6y_7\de_{y_8}).\\
	\end{align*}
\end{example}

\begin{corollary}\label{iso} The map $\Psi$:
\[f(x_1,x_2,x_3,x_4)\de_{x_i}\mapsto -\frac{1}{2} f(y_1,y_2,y_3,y_4)y_5y_6y_7y_8 \alpha_i \mod \tilde \de \tilde{R}E(4,4),\]
\[f(x_1,x_2,x_3,x_4)dx_i\mapsto -f(y_1,y_2,y_3,y_4)y_5y_6y_7y_8 \alpha_{i+4}\mod \tilde \de \tilde{R}E(4,4),\]
defines an isomorphism between $E(4,4)$ and 
$\mathcal A(RE(4,4))$. In particular Verma modules over
the Lie superalgebra $E(4,4)$ satisfy the duality property.
The shift $\chi$ defined in Definition \ref{rho} is 0.
\end{corollary}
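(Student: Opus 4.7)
The plan is to identify $\Psi$ with the composition $\varphi\circ\Phi$, where $\Phi:E(4,4)\hookrightarrow W(4,4)$ is the embedding of Theorem~\ref{Embedding} and $\varphi:W(4,4)\xrightarrow{\sim}\mathcal A(RW(4,4))$ is the isomorphism of Proposition~\ref{Wiso}. Since $RE(4,4)$ is a Lie conformal subalgebra of $RW(4,4)$, the inclusion induces a natural Lie superalgebra homomorphism $\iota:\mathcal A(RE(4,4))\to\mathcal A(RW(4,4))$, and the core of the proof is to verify the identity
\[\iota\circ\Psi=\varphi\circ\Phi.\]
Granting this, $\Psi$ is injective because $\varphi\circ\Phi$ is; it is surjective by Theorem~\ref{generators}, since every spanning element $f(y_1,\ldots,y_4)y_5y_6y_7y_8\alpha_i$ of $\mathcal A(RE(4,4))$ is, up to a scalar, equal to $\Psi(f(x_1,\ldots,x_4)\partial_{x_i})$ for $i\le 4$ or to $\Psi(f(x_1,\ldots,x_4)dx_{i-4})$ for $i\ge 5$; and the injectivity of $\iota$ on $\mathcal A(RE(4,4))$ (which follows from the injectivity of $\iota\circ\Psi$ combined with the surjectivity of $\Psi$) then forces $\Psi$ to be a Lie superalgebra homomorphism, since $\iota\circ\Psi$ is.

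The verification of $\iota\circ\Psi=\varphi\circ\Phi$ is the main obstacle and reduces to a check on the generators $f(X)\partial_{x_r}$ and $f(X)dx_i$ of $E(4,4)$. Inside $\mathcal A(RW(4,4))$ the identification $\partial_j u\equiv -\partial_{y_j}u$ modulo $\tilde\partial$ rewrites $y_5y_6y_7y_8\alpha_r$ as a polynomial differential operator in the $\partial_{y_k}$'s applied to the generators $a_1,\ldots,a_8$, and this expansion should match term-by-term the explicit formula for $\varphi(\Phi(f(X)\partial_{x_r}))$ obtained from Theorem~\ref{Embedding}; the case of one-forms is handled analogously. The bookkeeping of the many terms in the definition of $\alpha_i$ is the only delicate point.

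For the duality of Verma modules, Corollary~\ref{properties} already provides a $D$-conformal structure on $\mathcal A(RE(4,4))$ over $RE(4,4)$, which transfers to $E(4,4)$ through $\Psi$, so Theorem~\ref{main} applies. Finally, to show that the character $\chi$ of Definition~\ref{rho} vanishes, we use $\mathfrak g_{<0}\cong\mathbb C^{4|4}$ (corresponding to the images of $\partial_{x_i}$ and $dx_j$) and compute the supertrace component by component on $\mathfrak g_0=\hat p(4)$: for $x\in\mathfrak{sl}_4$, $\ad x$ preserves parity and is separately traceless on the even and odd parts of $\mathfrak g_{<0}$; for the central Euler vector field $E=\sum_{i=1}^4 x_i\partial_{x_i}$, the bracket formula $[X,\omega]=L_X(\omega)-\tfrac12\diver(X)\omega$ gives $[E,dx_j]=-dx_j$, so $\ad E$ acts as $-\mathrm{Id}$ on both parity components of $\mathfrak g_{<0}$ and $\str(-\mathrm{Id})=-4+4=0$; and for odd elements of $\hat p(4)$, the adjoint action exchanges the parities in $\mathfrak g_{<0}$, so its supertrace is automatically zero.
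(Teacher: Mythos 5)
Your proposal follows essentially the same route as the paper's proof: identify $\Psi$ with $\varphi\circ\Phi$ (with $\mathcal A(RE(4,4))$ viewed inside $\mathcal A(RW(4,4))$), get injectivity from the embedding of Theorem \ref{Embedding}, surjectivity from Theorem \ref{generators}, and the duality from Corollary \ref{properties} together with Theorem \ref{main}, leaving the generator-by-generator verification of $\iota\circ\Psi=\varphi\circ\Phi$ as a finite computation, exactly as the paper does. The one point where you genuinely diverge is the vanishing of $\chi$: you compute $\str(\ad x|_{\g_{-1}})$ directly on $\g_{-1}\cong\C^{4|4}$ (correctly, including the observation that $\ad E=-\mathrm{Id}$ has supertrace $-4+4=0$), whereas the paper simply notes that $\g_0\cong\hat p(4)$ coincides with its derived subalgebra, so every character of $\g_0$ vanishes; both arguments are valid, yours being more explicit and the paper's more conceptual.
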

\begin{proof} One can check that the map $\Psi=\varphi\circ \Phi$ is the composition of the maps $\varphi$ and $\Phi$ defined in Proposition \ref{Wiso} and Theorem \ref{Embedding}, respectively, where $\mathcal A(RE(4,4))$ is canonically identified with a Lie subalgebra of $\mathcal A(RW(4,4))$. The image of $\Psi$ is indeed ${\mathcal A}(RE(4,4))$ by Theorem \ref{generators}.
The shift is 0, since $E(4,4)_0$  coincides with its derived subalgebra.
\end{proof}

%
\medskip

\end{document}